\newtheorem{theorem}{Theorem}[section]
\newtheorem{lem}[theorem]{Lemma}
\newtheorem{prop}[theorem]{Proposition}
\newtheorem{rem}[theorem]{Remark}
\theoremstyle{definition}
\newtheorem{dfn}[theorem]{Definition}
\numberwithin{equation}{section}
\newcommand{\del}{\partial}
\DeclareMathOperator*{\divv}{div}
\DeclareMathOperator*{\Id}{Id}
\DeclareMathOperator*{\loc}{loc}
\newcommand*{\Bscr}{\mathcal B}
\newcommand*{\Dscr}{\mathcal D}
\newcommand*{\Fscr}{\mathcal F}
\newcommand*{\Mscr}{\mathcal M}
\newcommand*{\Pscr}{\mathcal P}
\newcommand*{\N}{\mathbb{N}}
\newcommand*{\R}{\mathbb{R}}
\newcommand*{\vrho}{\varrho}
\newcommand*{\divvrho}{\textup{div}_\vrho}
\DeclareMathOperator*{\converge-n}{\xrightarrow{\textit{n}\to \infty}}
\definecolor{orange}{rgb}{1.0, 0.55, 0.0} 
\begin{document}
\title{Weighted $L^1$-semigroup approach for nonlinear Fokker--Planck equations and generalized Ornstein--Uhlenbeck processes}

\author{Marco Rehmeier\footnote{Faculty of Sciences, Scuola Normale Superiore Pisa, Italy. E-Mail: mrehmeier@math.uni-bielefeld.de }}
\date{\today}
\maketitle
\begin{abstract}
For the nonlinear Fokker--Planck equation
$$	\partial_tu = \Delta\beta(u)-\nabla \Phi \cdot \nabla \beta(u) - \textup{div}_{\varrho}\big(D(x)b(u)u\big),\quad (t,x) \in (0,\infty)\times \R^d,$$
where $\varrho = \exp(-\Phi)$ is the density of a finite Borel measure and $\nabla \Phi$ is unbounded, we construct mild solutions with bounded initial data via the Crandall--Liggett semigroup approach in the weighted space $L^1(\mathbb{R}^d,\mathbb{R};\varrho dx)$. By the superposition principle, we lift these solutions to weak solutions to the corresponding McKean--Vlasov SDE, which can be considered a model for generalized nonlinear perturbed Ornstein--Uhlenbeck processes. Finally, for these solutions we prove the nonlinear Markov property in the sense of McKean.
\end{abstract}
	\noindent	\textbf{Keywords:} Nonlinear Fokker--Planck equation, mild solution, weighted Sobolev space, McKean--Vlasov stochastic differential equation, nonlinear Markov Process\\
	\textbf{2020 MSC}: 35Q84, 35K55, 46B25, 60H30, 60J25

\section{Introduction}
We study the nonlinear parabolic Fokker--Planck equation on $\R^d$, $d \geq 1$,
\begin{equation}\label{equation-main}
	\del_tu = \Delta\beta(u)-\nabla \Phi \cdot \nabla \beta(u) - \divvrho\big(D(x)b(u)u\big),\quad (t,x) \in (0,\infty)\times \R^d,
\end{equation}
where $\vrho: \R^d \to \R$ is the density of a finite Borel measure, $\beta:\R \to \R $ is increasing, $D: \R^d \to \R^d$ and $b: \R \to \R$ are bounded, $\nabla \Phi = \nabla \log \vrho$, and $\divvrho$ denotes the dual operator to the gradient in $L^2(\R^d,\R;\vrho dx)$ (please see Hypothesis 1 in Section \ref{sect2} for the precise assumptions). Our aim is twofold: First, we apply the well-known Crandall-Liggett semigroup approach in the weighted space $L^1(\R^d,\R;\vrho dx)$ to construct mild solutions $u \in C\big([0,\infty),L^1(\R^d,\R;\vrho dx)\big)$  to \eqref{equation-main} (see Definition \ref{def:gen-sol}). Secondly, we show that these solutions, when started from probability density initial data, are the curves of $1D$-time marginals of probabilistically weak solutions to the associated nonlinear McKean--Vlasov stochastic differential equation (SDE)
\begin{align}\label{eq:SDE}
	dX_t &= \bigg[D(X_t)b\big(v(t,X_t)\vrho^{-1}(X_t)\big) - \frac{\beta(v(t,X_t)\vrho^{-1}(X_t))}{v(t,X_t)\vrho^{-1}(X_t)}\nabla \Phi(X_t)\bigg]dt +\sqrt{2 \frac{\beta(v(t,X_t)\vrho^{-1}(X_t))}{v(t,X_t)\vrho^{-1}(X_t)}}dB_t,
	\\ \mathcal{L}(X_t) &= v(t,x)dx \notag.
\end{align}
Here $\mathcal{L}(X)$ denotes the distribution of a random variable $X$, and $B$ is an $\R^d$-Brownian motion. We also show that these solutions to \eqref{eq:SDE} constitute a nonlinear Markov process in the sense of McKean, see \cite{McKean1-classical,R./Rckner_NL-Markov22}.

\textbf{Related results.}
Existence, uniqueness, asymptotic behavior and probabilistic representation of solutions to nonlinear Fokker--Planck equations, in particular of type
\begin{equation}\label{eqeqeq}
	\partial_t u = \Delta \beta(u)-\divv\big(B(x,u)u\big),
\end{equation}
where $B: \R^{d+1} \to \R$ is bounded,
have frequently been investigated in recent years, see for instance  \cite{BR18_2,NLFPK-DDSDE5,BR22,BR-IndianaJ} and the references therein. In \cite{NLFPK-DDSDE5}, for $B(x,r) = D(x)b(r)$ and under appropriate assumptions on the coefficients, the Crandall--Liggett semigroup approach in $L^1 = L^1(\R^d,\R;dx)$ is applied to construct an $L^1$-contraction semigroup $S(t), t\geq 0$, of mild solutions $t\mapsto S(t)(u_0)$, where the initial data $u_0$ are from $L^1$. In fact, in the same paper, this result is even extended to finite Radon measures as initial data, and an $L^1-L^\infty$-regularization result is obtained, i.e. solutions, even those equal to degenerate measures at $t=0$, are $L^1 \cap L^\infty$-valued for $t>0$. Since these solutions preserve non-negativity and total mass, starting from probability initial data yields solution curves of probability measure densities. By the Ambrosio--Figalli--Trevisan superposition principle (see \cite{Trevisan16},\cite{BR18_2}), such probability solutions are the $1D$-time marginals of solutions to the McKean--Vlasov SDE
\begin{equation*}
	dX_t = D(X_t)b(u(t,X_t))dt + \sqrt{2 \frac{\beta(u(t,X_t))}{u(t,X_t)}}dB_t.
\end{equation*}
Solutions to the latter equation, at least if $D$ is a gradient vector field, are called nonlinear distorted Brownian motion, see \cite{RRW20}. 
We would also like to mention further recent existence result obtained via the $L^1$-semigroup approach, for instance \cite{BR22-frac,BRS23}, where the authors solve equations of type \eqref{eqeqeq}, but $\Delta$ is replaced by a nonlocal operator, e.g. a fractional Laplacian. 

\textbf{Weighted semigroup approach.}
In the present paper, we initiate a similar program for \eqref{equation-main}, but, in contrast to all previously mentioned works, in the weighted space $L^1(\R,\R;\vrho dx)$. Our main result can be summarized as follows (please see Theorem \ref{main-thm} for the precise statement): If Hypothesis 1 below is satisfied, for any $u_0 \in L^1(\R^d,\R;\vrho dx)\cap L^\infty$ there is a mild solution $S(t)u_0, t \geq 0,$ to \eqref{equation-main}, and $(S(t))_{t\geq 0}$ is a semigroup of $L^1(\R^d,\R;\vrho dx)$-contractions. The proof is inspired by the one in \cite{NLFPK-DDSDE5}. One advantage of our approach is that, compared to the standard approach, it allows to consider nonlinear unbounded first-order perturbations, which are not of divergence--type. Indeed, a main example for $\nabla \Phi$ is $\Phi(x) = \frac{-|x|^2}{2}$, $x \in \R^d$. Equation \eqref{equation-main} can be treated by the semigroup approach in $L^1(\R^d,\R;\vrho dx)$, since the operator $Lf:= \Delta f - \nabla \Phi \cdot \nabla f$ is symmetric on $L^2(\R^d,\R;\vrho)$. In fact, $\vrho dx$ is an infinitesimally invariant measure for $L$, i.e. $L^*\vrho dx = 0$, where $L^*$ denotes the formal dual operator of $L$. The pair $(L,\vrho dx)$ can be considered a generalization of $(\Delta, dx)$. In this general setting we lose, however, very nice mapping properties of the fundamental solution to $\Delta$ in Marcinkiewicz spaces. Hence, so far we are only able to prove mild existence for initial data $u_0$ from $L^1(\R^d,\R;\vrho dx)$ which are also bounded. We can remove the boundedness restriction on $u_0$ when $\beta$ is globally Lipschitz, see Proposition \ref{prop:prop1}. The case of more general initial data, e.g. degenerate finite Borel measures, is not addressed in the present paper.

\textbf{Generalized Ornstein--Uhlenbeck processes.}
As said before, a second aspect of this work is the probabilistic representation of solutions to \eqref{equation-main}. Since our mild solutions $t\mapsto S(t)u_0$ are also distributional solutions (see Def.\eqref{def:distr-sol}), by the aforementioned superposition principle, we obtain weak solutions to \eqref{eq:SDE} with $1D$-time marginals $(S(t)u_0)_{t\geq 0}$. Solutions to \eqref{eq:SDE} can be considered \textit{generalized nonlinear perturbed Ornstein--Uhlenbeck processes}, since for $D = 0$, $\beta(r) = \frac{\sigma^2}{2}r$ and $\Phi(x) = -\frac{|x|^2}{2}$, the SDE becomes the classical Ornstein--Uhlenbeck equation. We stress that both \eqref{equation-main} and \eqref{eq:SDE} have nonlinear, irregular coefficients (of so-called Nemytskii-type, i.e. the dependence of the coefficients on a measure is pointwise via its density w.r.t. Lebesgue measure). 

Finally, we prove that the laws of the solutions to \eqref{eq:SDE} with $1D$-time marginals $(S(t)u_0)_{t\geq 0}$ constitute a \textit{nonlinear Markov process}. Such processes were suggested by McKean in 1966 \cite{McKean1-classical} and were recently revisited in \cite{R./Rckner_NL-Markov22}. McKean's vision was to represent solutions to nonlinear PDEs as the $1D$-time marginals of stochastic processes satisfying a nonlinear Markov property. Our results in Section \ref{sect:stoch-section} are a contribution to his vision for equation \eqref{equation-main}, which to the best of our knowledge has not been considered in this direction before.

\textbf{Organization.}
The paper is organized as follows. In Section \ref{sect2}, we state our assumptions on the coefficients, recall the notion of mild and distributional solutions to \eqref{equation-main} and state the new existence results, Theorem \ref{main-thm} and Proposition \ref{prop:prop1}. Moreover, we state the key lemma towards these results, Lemma \ref{main-lemma}. Section \ref{sect:proof-main-lemma} is devoted to the proof of this lemma. In Section \ref{sect:stoch-section}, we lift our solutions to \eqref{equation-main} to solutions to \eqref{eq:SDE} and prove the nonlinear Markov property.
\\

\textbf{Notation.}
We set $\N_0 := \{0,1,2,...\}$. The usual Euclidean norm and scalar product on $\R^k$, $k \geq 1$, are $|\cdot|$ and $"\cdot"$. For an $\R$-valued function $f$, we write $f^+ := \max(f,0)$ and $f^- := -\min(f,0)$. If $(A,D(A))$, is an operator, $R(A)$ denotes its range. If $X$ is a topological space, we denote its Borel $\sigma$-algebra by $\Bscr(X)$. 

For $\vrho: \R^d \to \R_+$ measurable and $p \in [1,\infty)$, we denote the usual spaces of $p$-fold $\vrho dx$-integrable vector fields from $\R^d$ to $\R^k$ by $L^p(\R^d,\R^k;\vrho)$, and their usual norms by $|\cdot|_{p,\vrho}$. If $k=1$, we simply write $L^p(\vrho)$, and $L^p(\R^d,\R^k)$ ($L^p$, for $k = 1$), if $\vrho \equiv 1$. In the Hilbert space case $p=2$, the scalar product is denoted $\langle \cdot, \cdot \rangle_{2,\vrho}$. For the corresponding local spaces, we always suppress the dependence on $\vrho$, since we will only consider the case $0<\vrho \leq 1$. For the same reason, for $p=\infty$ we write $L^\infty(\R^d,\R^k)$ ($L^\infty$, for $k = 1$) independent of the measure $\vrho dx$, and $|\cdot|_\infty$ for its norm (which is independent of $\vrho$).
$(W^{m,p}(\vrho),|\cdot|_{W^{m,p}(\vrho)})$ are the usual Sobolev spaces of $m$-times weakly differentiable functions $g: \R^d \to \R$ with $p$-fold $\vrho dx$-integrable derivatives of order up to $m$. If $\vrho \equiv 1$, we simply write $W^{m,p}$. For $p=2$, we write $H^m(\vrho)$ and $H^m$ instead. For the latter spaces, we denote by $H^{-m}(\vrho)$ and $H^{-m}$ their topological dual spaces, respectively.

For $m \in \N_0 \cup \{\infty\}$, we denote the spaces of $m$-times differentiable functions $g: U\subseteq \R^k \to \R$ (with bounded or compactly supported derivatives up to order $m$) by $C^m(U)$ ($C^m_b(U), C^m_c(U)$, respectively). For $k=d$ and $U=\R^d$, we simply write $C^m, C^m_b$, $C^m_c$.  More generally, for a topological space $X$ we let $C(U,X)$ denote the space of continuous maps $g: U \to X$. The usual space of distributions on $\R^d$ (in duality with $C^\infty_c)$ is $\mathcal{D}'$.

$\Mscr_b$ is the set of non-negative, finite Borel measures on $\R^d$, and $\Pscr$ its subset of Borel probability measures. The weak topology on $\Mscr_b$ is the initial topology of the maps $\mu \mapsto \int_{\R^d}\varphi \,d\mu$, $\varphi \in C_b$. We also set
$$\Pscr_\infty(\vrho):= \bigg\{u \in L^1(\vrho)\cap L^\infty, u \geq 0, \int_{\R^d} u\, \vrho dx=1\bigg\}.$$

Finally, we stress that by $f^{-1}$ we usually denote the inverse of a map $f$, but for the case $f=\vrho$ with $\vrho$ as fixed at the beginning of the next section, we mean $\vrho^{-1} := \frac 1 \vrho$.

\section{Mild solutions to nonlinear Fokker--Planck equations via weighted semigroup approach}\label{sect2}
Throughout, we fix an integer $d \geq 1$. Let $\Phi: \R^d \to \R$, $\vrho := e^{-\Phi}$ and consider the nonlinear Fokker--Planck equation \eqref{equation-main}
with Cauchy initial condition $u(0) = u_0$,
where $u_0: \R^d \to \R$.
Here, $-\divvrho$ denotes the dual operator to the gradient $\nabla$ in $L^2(\vrho)$, i.e. for $F\in L^2(\R^d,\R^d;\vrho)$, $\divvrho F \in H^{-1}(\vrho)$ is defined via
\begin{equation*}
-\int_{\R^d} (\divvrho F) g \,\vrho dx = \int_{\R^d} F \cdot \nabla g \,\vrho dx, \quad \forall g \in H^1(\vrho).
\end{equation*}
A heuristic integration by parts on the right-hand side suggests $\divvrho F = \divv F + F\cdot \nabla (\log \vrho)$, where $\divv$ denotes the usual divergence. The latter equality holds in $L^2(\vrho)$, if all terms are well-defined in $L^2(\vrho)$. Note that $\vrho dx$ is Fomin differentiable, if $\nabla (\log \vrho) = -\nabla \Phi \in L^1(\R^d,\R^d;\vrho)$. We work under the following assumptions.
\paragraph{Hypothesis 1}
\begin{enumerate}
	\item[(H1)] $\Phi \in C^2$ is non-negative and  convex with $\lim_{|x|\to \infty}\Phi(x)=\infty$, 
	and $\nabla \Phi \in L^1(\vrho)$.
	\item[(H2)] $\beta \in C^2(\R)$, $\beta'(r)>0$ for $r \neq 0$, and $\beta(0) = 0$. 
	\item[(H3)] $b\geq 0$, $b \in C_b(\R)$.
	\item[(H4)] $D \in L^\infty(\R^d,\R^d)$, $\divvrho D \in L^2_{\text{loc}} $, $(\divvrho D)^-\in L^\infty$.
\end{enumerate}
\begin{rem}
	\begin{enumerate}
		\item [(i)] By (H1), we have $\vrho \in C^2$, and, since (H1) also implies the existence of $a \in \R$ and $b >0$ such that $\Phi(x)\geq a+b|x|$, one has $\int_{\R^d} \vrho\,dx <\infty$, i.e. $\vrho dx$ is a finite measure.
		\item[(ii)] Moreover,  since (H1) implies $0< \vrho \leq 1$ and $\inf_{x \in K}\vrho(x) >0$ for all compacts $K \subseteq \R^d$, the embedding $W^{m,p}\subseteq W^{m,p}(\vrho )$ is continuous for all $m \in \N_0$, $p \in [1,\infty)$.
		\item[(iii)] $\nabla \Phi \in L^1(\R^d,\R^d;\vrho)$ is equivalent to $\nabla \vrho \in L^1(\R^d,\R^d)$.
	\end{enumerate}
\end{rem}
We will construct \textit{generalized solutions} to \eqref{equation-main} in the following sense.
\begin{dfn}\label{def:gen-sol}
	\begin{enumerate}
		\item [(i)] 	Let $X$ be a real Banach space, $(\tilde{A},D(\tilde{A}))$ an operator $\tilde{A}: D(\tilde{A})\subseteq X\to X$, and $u_0 \in X$. A continuous function $u: [0,\infty)\to X$ is called \emph{generalized (\textup{or} mild) solution} to the Cauchy problem
		\begin{equation}\label{eq:gen-nonlinear-ev-eq}
			\frac{d}{dt}u + \tilde{A}u = 0, \quad u(0) = u_0,
		\end{equation}
		if there is $h_0>0$ such that for all $0<h<h_0$ there is a step function $u_h$, defined by the implicit finite difference scheme
		\begin{align}\label{eq:fin-diff-scheme}
			&\notag u_h(0) :=u^0_h :=  u_0, \\
			&u_h(t) := u^i_h, \forall t \in ((i-1)h,ih],i \in \N,\\
			&\notag u^i_h \in D(\tilde{A}), u^{i}_h + h\tilde{A}u^i_h = u^{i-1}_h,
		\end{align}
		such that 
		\begin{equation*}
			u(t) = \lim_{h \to 0}u_h(t)\text{ in }X,\text{ locally uniformly in }t.
		\end{equation*}
		\item[(ii)] $u$ is  a \textit{generalized (mild) solution to \eqref{equation-main}} with initial datum $u_0\in L^1(\vrho)$, if $u$ is a solution in the sense of (i) with $X=L^1(\vrho)$ and $(\tilde{A},D(\tilde{A})=(A_0,D(A_0))$, where $(A_0,D(A_0))$ is defined in \eqref{def:op-A0} below.
	\end{enumerate}
	
\end{dfn}
If $(\tilde{A},D(\tilde{A}))$ is $m$-accretive, then for each $u_0 \in \overline{D(\tilde{A})}$ (where $\overline{D(\tilde{A})}$ denotes the closure of $D(\tilde{A})$ in $X$), there exists a unique generalized solution $u=u(u_0)$ to \eqref{eq:gen-nonlinear-ev-eq}, see \cite{CL71, B12-Mon-Op-book}, the approximations $u_h$ are given by
\begin{equation*}
	u_h(t) = (I+h\tilde{A})^{-i}u_0, \quad t \in ((i-1)h,ih],
\end{equation*}
and the exponential formula
\begin{equation*}\label{exp-formula}
	u(t) = \lim_{n \to \infty}\bigg(I+\frac t n \tilde{A}\bigg)^{-n}u_0
\end{equation*}
holds locally uniformly in $t$.
In order to relate mild solutions $u$ to \eqref{equation-main} to solutions to the corresponding McKean--Vlasov SDE \eqref{eq:SDE}, we need to show that $u$ is also a \textit{distributional solution} to \eqref{equation-main} in the following sense.
\begin{dfn}\label{def:distr-sol}
	A measurable curve  $\nu: [0,\infty) \to \Mscr_b$ is a \emph{distributional solution} to \eqref{equation-main} with initial datum $\nu_0 \in \Mscr_b$, if $\nu(t) = v(t) dx$ such that $v,\beta(v\vrho^{-1})\vrho\in L^1_{\loc}([0,\infty)\times \R^d; dxdt)$ and for all $\varphi \in C^\infty_c$ there is a set of full $dt$-measure $T_\varphi \subseteq (0,\infty)$ such that for all $t \in T_\varphi$
	\begin{equation}\label{eqaux0}
		\int_{\R^d} \varphi \,v(t) dx = \int_{\R^d} \varphi \,d\nu_0 + \int_0^t \int_{\R^d} \frac{\beta(v(s)\vrho^{-1})}{v(s)\vrho^{-1}}\Delta\varphi +\bigg(b(v(s)\vrho^{-1})D -\frac{\beta(v(s)\vrho^{-1})}{v(s)\vrho^{-1}} \nabla \Phi \bigg)\cdot \nabla \varphi\,d\nu(s) ds.
	\end{equation}
We call $\nu$ a \textit{probability solution}, if $\nu(t)$ is a probability measure for $dt$-a.e. $t >0$.
\end{dfn}
The local integrability assumptions from the previous definition are fulfilled, if $v(t)=u(t)\vrho$, where $u$ is the mild solution from Theorem \ref{main-thm}. If $v: t \mapsto \nu(t)$ is vaguely continuous, one can choose $T_\varphi = (0,\infty)$ for all $\varphi \in C_c^\infty$.
The following theorem is our main existence result for generalized and distributional solutions to \eqref{equation-main}.
\begin{theorem}\label{main-thm}
	Let Hypothesis 1 be fulfilled. For each $u_0 \in L^1(\vrho)\cap L^\infty$ there exists a generalized solution $u = u(u_0) \in C([0,\infty),L^1(\vrho))$ to \eqref{equation-main} with initial datum $u_0$ such that
	\begin{equation}\label{eq:exp-bound-infty-norm}
	|u(t)|_\infty \leq \exp\big(|(\divvrho D)^-+|D||_\infty^{\frac 1 2}t\big)|u_0|_\infty,\quad \forall t \geq 0,
	\end{equation}
	and $t\mapsto u(t)\vrho dx$ is also a weakly continuous distributional solution to \eqref{equation-main}. Moreover, $t \mapsto S(t)u_0 := u(u_0)(t)$ is a semigroup of $L^1(\vrho)$-contractions on $L^1(\vrho)\cap L^\infty$, i.e. for all $u_0,v_0 \in L^1(\vrho)\cap L^\infty$ and $t,s \geq 0$
		\begin{align}
		&S(t+s)u_0 = S(t)S(s)u_0, \,\, S(0) = \Id,\label{sg-prop}\\
		&|S(t)u_0-S(t)v_0|_{1,\vrho} \leq |u_0-v_0|_{1,\vrho}\notag.
	\end{align}
		Furthermore, if $u_0 \in \Pscr_\infty(\vrho)$, then 
		\begin{equation}\label{prop-measure-preserving}
			u(t) \in \Pscr_\infty(\vrho),\quad \forall t \geq 0.
		\end{equation}
\end{theorem}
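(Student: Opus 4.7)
The plan is to deduce Theorem \ref{main-thm} from Lemma \ref{main-lemma} by applying the abstract Crandall--Liggett scheme in $L^1(\vrho)$ and then upgrading the resulting abstract mild solution to a distributional, contractive, mass-preserving object. Concretely, Lemma \ref{main-lemma} is expected to provide, for all sufficiently small $h>0$, that $I+hA_0$ is surjective on (a dense subset of) $L^1(\vrho)\cap L^\infty$, that its inverse $(I+hA_0)^{-1}$ is an $L^1(\vrho)$-contraction preserving non-negativity and $\vrho\,dx$-mass, and that it satisfies a pointwise $L^\infty$-bound which, upon $n$-fold iteration with $h=t/n$, reproduces exactly \eqref{eq:exp-bound-infty-norm}.

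Given these resolvent properties, I would first invoke the Crandall--Liggett theorem in $L^1(\vrho)$: the iterates $u_h^i=(I+hA_0)^{-i}u_0$ stay in $L^1(\vrho)\cap L^\infty$, the step functions $u_h$ from \eqref{eq:fin-diff-scheme} converge locally uniformly in $L^1(\vrho)$ to a generalized solution $u$ described by the exponential formula, and the semigroup identity together with the $L^1(\vrho)$-contraction estimate \eqref{sg-prop} descend directly from the corresponding properties of the resolvent. The bound \eqref{eq:exp-bound-infty-norm} on $u(t)$ follows by passing to the limit in the iterated resolvent $L^\infty$-estimate.

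For the distributional property, I would test the resolvent identity $u_h^i+hA_0u_h^i=u_h^{i-1}$ against $\varphi\vrho$ with $\varphi\in C_c^\infty$. Using the defining relation of $\divvrho$ and integrating by parts twice (exploiting $\nabla\vrho=-\vrho\nabla\Phi$) one obtains the discrete identity
\begin{equation*}
\int_{\R^d}\varphi u_h^i\vrho\,dx - \int_{\R^d}\varphi u_h^{i-1}\vrho\,dx = h\int_{\R^d}\Big[\beta(u_h^i)\Delta\varphi - \beta(u_h^i)\nabla\Phi\cdot\nabla\varphi + b(u_h^i)u_h^i\,D\cdot\nabla\varphi\Big]\vrho\,dx.
\end{equation*}
Summing from $i=1$ to $\lfloor t/h\rfloor$ and passing $h\to 0$ via the locally uniform $L^1(\vrho)$-convergence of $u_h$ to $u$, the uniform $L^\infty$-bound, the continuity and boundedness of $\beta$ and $b$, and dominated convergence, recovers \eqref{eqaux0} with $v=u\vrho$ for every $t\geq 0$. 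Weak continuity of $t\mapsto u(t)\vrho\,dx$ in $\Mscr_b$ is immediate from the strong $L^1(\vrho)$-continuity of $u$.

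Finally, \eqref{prop-measure-preserving} transfers from the resolvent: non-negativity of $u_h^i$ and the mass identity $\int u_h^i\,\vrho\,dx=\int u_0\,\vrho\,dx$ (both guaranteed by Lemma \ref{main-lemma}) survive the $L^1(\vrho)$-limit; combined with the $L^\infty$-bound this delivers $u(t)\in\Pscr_\infty(\vrho)$. The main obstacle is really Lemma \ref{main-lemma} itself: producing $(I+hA_0)^{-1}$ in the weighted space with the needed $L^1(\vrho)$-contractivity, $L^\infty$-bound, and positivity/mass preservation requires solving the stationary nonlinear elliptic equation $u+hA_0 u=f$ with delicate control of the unbounded drift term $\nabla\Phi\cdot\nabla\beta(u)$ together with the non-divergence first-order term $\divvrho(Db(u)u)$. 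Once that lemma is in place, the rest of Theorem \ref{main-thm} is a relatively routine consequence of nonlinear semigroup theory together with testing against smooth compactly supported functions.
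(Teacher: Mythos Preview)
Your proposal is correct and follows essentially the same route as the paper: derive resolvent properties from Lemma \ref{main-lemma}, apply the Crandall--Liggett scheme, pass the iterated $L^\infty$-bound and the positivity/mass preservation through the exponential formula, and obtain the distributional formulation by testing the discrete scheme and letting $h\to 0$. Two minor technical points you glossed over that the paper handles explicitly: first, $(I+hA_0)^{-1}$ need not be single-valued, so the paper passes to a restriction $A\subseteq A_0$ with domain $D(A):=J_\lambda(L^1(\vrho)\cap L^\infty)$ for which $I+\lambda A$ is bijective with inverse $J_\lambda$ (this is Lemma \ref{final-lem}); second, since $A$ is not $m$-accretive on all of $L^1(\vrho)$, one needs the density $L^1(\vrho)\cap L^\infty\subseteq\overline{D(A)}$, which comes from estimate \eqref{eq:density-of-L1-Linfty-in-DA}, and must observe that the Crandall--Liggett construction in \cite[Thm.4.3]{B12-Mon-Op-book} still goes through with this weaker input.
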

Moreover, if in addition to (H1)-(H4) also (H2') holds, where
\begin{enumerate}
\item[(H2')] $\beta$ is Lipschitz continuous,
\end{enumerate}
we get the following stronger result.
\begin{prop}\label{prop:prop1}
	If (H1)-(H4) and (H2') are satisfied, there is a generalized and distributional solution $u=u(u_0)$ to \eqref{equation-main} for every $u_0 \in L^1(\vrho)$, and the semigroup- and $L^1(\vrho)$-contraction property of $(S(t))_{t \geq 0}$, $S(t)(u_0) = u(u_0)(t)$, from Theorem \ref{main-thm} extend to $L^1(\vrho)$. Moreover, if $u_0 \in L^1(\vrho)$ such that $u_0 \vrho dx \in \Pscr$, then $S(t)u_0\,\vrho dx \in \Pscr$ for all $t \geq 0$.
\end{prop}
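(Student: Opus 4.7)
The plan is to approximate a general $u_0 \in L^1(\vrho)$ by bounded truncations and leverage the $L^1(\vrho)$-contraction from Theorem \ref{main-thm} to pass to the limit. First I would set $u_0^n := (-n) \vee u_0 \wedge n \in L^1(\vrho) \cap L^\infty$, so that $u_0^n \to u_0$ in $L^1(\vrho)$ by dominated convergence. Theorem \ref{main-thm} provides mild solutions $u^n(t) := S(t) u_0^n \in C([0,\infty), L^1(\vrho))$, and the contraction estimate $|u^n(t) - u^m(t)|_{1,\vrho} \leq |u_0^n - u_0^m|_{1,\vrho}$ makes $\{u^n\}$ Cauchy in $C([0,T], L^1(\vrho))$ for every $T>0$. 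Setting $u(t) := \lim_n u^n(t)$ and $S(t) u_0 := u(t)$, the semigroup identity and the $L^1(\vrho)$-contraction extend to all of $L^1(\vrho)$ by continuity.

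The key verification is that $u$ is still a generalized solution in the sense of Definition \ref{def:gen-sol}(ii). Under (H2'), the global Lipschitz bound on $\beta$ should allow one to strengthen the range condition behind Lemma \ref{main-lemma}, so that the resolvent equation $w + hA_0 w = f$ is solvable for every $f \in L^1(\vrho)$, not only for $f \in L^1(\vrho) \cap L^\infty$. This promotes $(A_0, D(A_0))$ to an $m$-accretive operator on the full space, and the Crandall--Liggett theorem then directly produces a mild solution for every $u_0 \in \overline{D(A_0)} = L^1(\vrho)$; uniqueness forces it to coincide with $u$. Alternatively, one may extend the resolvent $J_h = (I + h A_0)^{-1}$ by density to an $L^1(\vrho)$-contraction on $L^1(\vrho)$ and then interchange the limits $n \to \infty$ and $h \to 0$ in the finite-difference scheme \eqref{eq:fin-diff-scheme} via the uniform contraction estimates.

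For the distributional property I would pass to the limit in \eqref{eqaux0} applied to $v^n := u^n \vrho$. Since $\beta$ is Lipschitz with $\beta(0)=0$, we have $|\beta(u^n) - \beta(u)| \leq L|u^n - u|$, giving $\beta(u^n)\vrho \to \beta(u)\vrho$ in $L^1_{\loc}$. Along a subsequence with $u^n \to u$ pointwise a.e., the bounded continuity of $b$ together with dominated convergence handles the convective term $b(u^n) u^n \vrho D$. Because each test function $\varphi \in C^\infty_c$ localizes the spatial integrals to a compact set on which $\vrho$, $\nabla \Phi$ and $D$ are bounded, all integrability issues are local. Probability preservation follows in the same spirit: given $u_0 \in L^1(\vrho)$ with $u_0 \vrho dx \in \Pscr$, the truncations $u_0 \wedge n$, normalized by their $L^1(\vrho)$-masses $c_n \uparrow 1$, lie in $\Pscr_\infty(\vrho)$ and converge to $u_0$ in $L^1(\vrho)$; applying Theorem \ref{main-thm} and passing to the limit preserves non-negativity and the unit $\vrho dx$-mass.

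The hard part will be the $m$-accretivity upgrade to all of $L^1(\vrho)$: one must solve the resolvent equation for arbitrary $L^1(\vrho)$-data, whereas the bounded-$f$ case is what the proof of Lemma \ref{main-lemma} is tailored to. The linear growth $|\beta(r)| \leq L|r|$ afforded by (H2') should be the crucial ingredient, since it yields an $L^1(\vrho)$-bound on $\beta(w)$ directly from one on $w$ and removes the $L^\infty$ bootstrap argument built into Theorem \ref{main-thm}. Once this analytic step is in place, the rest is a routine density argument powered by the contraction.
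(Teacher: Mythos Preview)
Your proposal is correct and follows essentially the same route as the paper: extend the resolvent $J_\lambda$ from $L^1(\vrho)\cap L^\infty$ to all of $L^1(\vrho)$ by density, using the $L^1(\vrho)$-contraction and (H2') to identify the limit as a solution of $u+\lambda A_0 u=f$ in the enlarged domain $\tilde{D}(A_0)=\{f\in L^1(\vrho): A_0 f\in L^1(\vrho)\}$, then apply Crandall--Liggett on the full space; the probability-preservation part is likewise handled by truncation and passage to the limit. The only minor deviation is that for the distributional property the paper reruns the finite-difference argument $h\to 0$ (with (H2') ensuring $\beta(u_h)\to\beta(u)$ in $L^1(\vrho)$), whereas you propose passing $n\to\infty$ directly in \eqref{eqaux0}; both work.
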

The proofs of Theorem \ref{main-thm} and Proposition \ref{prop:prop1} are given in the remainder of the present and the following section. First, we introduce the operators $(L,D(L))$ and $(L_0, D(L_0))$ as follows.
$$L: D(L)\to L^2(\vrho), \quad Lf:= \Delta f - \nabla \Phi \cdot \nabla f,\quad D(L) := \{f \in H^2(\vrho)\,|\,\nabla \Phi \cdot \nabla f \in L^2(\vrho)\}.$$ By Lemma \ref{lem-all-symmetry-eq-L} below, $(L,D(L))$ is symmetric on $L^2(\vrho)$.
Since $\Phi \in C^2$, $\Delta - \nabla \Phi \cdot \nabla$ can also be considered a continuous operator  $L_0$,
$$L_0: D(L_0) := L^1_{\textup{loc}} \to \mathcal{D}', L_0 f = \Delta f - \nabla \Phi \cdot \nabla f.$$ Then, $L = L_0$ on $D(L)\subseteq D(L_0)$ in the sense that $Lf = L_0f$ as elements in $\Dscr'$ for $f \in D(L)$.
Consider the nonlinear operator $(A_0,D(A_0))$ in $L^1(\vrho)$
\begin{align}\label{def:op-A0}
A_0(f) &:=  -\Delta \beta(f)+\nabla \Phi \cdot \nabla \beta(f)+\divvrho(Db(f)f) =-L_0\beta(f)+\divvrho(Db(f)f),\\
D(A_0) &:= \big\{f \in L^1(\vrho)\cap L^\infty\,|\, -L_0\beta(f)+\divvrho(Db(f)f) \in L^1(\vrho)\cap L^\infty\big\}.\notag
\end{align}
Note that  $f \in D(A_0)$ implies $\beta(f) \in L^1(\vrho)\cap L^\infty$ (since $\beta$ is locally Lipschitz continuous), and hence $L_0\beta(f) \in \Dscr'$.
The following lemma is the key part to prove Theorem \ref{main-thm}.
\begin{lem}\label{main-lemma}
 Let $\lambda_0 := [(|(\divvrho)^-+|D||_\infty+|(\divvrho)^-+|D||_\infty^{\frac 1 2})|b|_\infty]^{-1}$.  We have 
	\begin{enumerate}
		\item [(i)] $R(I+\lambda A_0) = L^1(\vrho)\cap L^\infty$, $\forall 0<\lambda <\lambda_0$.
		\item[(ii)] For each $0<\lambda < \lambda_0$, there is $J_\lambda: L^1(\vrho)\cap L^\infty \to D(A_0)\subseteq L^1(\vrho)\cap L^\infty$, $J_\lambda f \in (I+\lambda A_0)^{-1}f$ such that
		\begin{equation*}\label{contraction-statement}
			|J_\lambda f -J_\lambda g |_{1,\vrho}\leq |f-g|_{1,\vrho}, \quad\forall f,g \in L^1(\vrho)\cap L^\infty.
		\end{equation*}
	Moreover, for all $f\in L^1(\vrho)\cap L^\infty$ and $0<\lambda_1,\lambda_2,\lambda<\lambda_0$
	\begin{equation}\label{resolvent-eq-statement}
		J_{\lambda_2}f = J_{\lambda_1}\bigg(\frac{\lambda_1}{\lambda_2}f + \big(1-\frac{\lambda_1}{\lambda_2}\big)J_{\lambda_2}f\bigg),
	\end{equation}
\begin{equation}\label{infty-bound-statement}
	|J_\lambda f|_\infty \leq(1+|(\divvrho D)^-+|D||_\infty^{\frac 1 2}) |f|_\infty,
\end{equation}
and 
\begin{equation}\label{prop-measure-lemma}
	J_\lambda (\Pscr_\infty(\vrho)) \subseteq \Pscr_\infty(\vrho).
\end{equation}
Finally,
\begin{equation}\label{eq:density-of-L1-Linfty-in-DA}
|J_\lambda g - g|_{1,\vrho} \leq C\lambda (|g|_{H^2(\vrho)}+|g|_{W^{2,1}(\vrho)}),\quad \forall g \in C^\infty_c,
\end{equation}
for a constant $C>0$ independent of $\lambda$.

	\end{enumerate}
\end{lem}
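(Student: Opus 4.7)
The overall strategy is the standard route to $m$-accretivity of $(A_0,D(A_0))$ on $L^1(\vrho)$: solve the resolvent equation $f+\lambda A_0 f = g$ for every $g\in L^1(\vrho)\cap L^\infty$ and then read off the quantitative properties. Because $\beta$ is only degenerate-monotone and $A_0$ contains the non-monotone first-order perturbation $\divvrho(Db(\cdot)\,\cdot)$, I would work by approximation: replace $\beta$ by $\beta_\epsilon(r):=\beta(r)+\epsilon r$, which is strictly monotone with linear growth, and, if necessary, mollify $b$, so that the approximate resolvent problem
\begin{equation*}
f - \lambda L_0\beta_\epsilon(f) + \lambda\divvrho\bigl(Db(f)f\bigr) = g
\end{equation*}
becomes amenable to Hilbert-space methods.

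For existence at the approximate level I would use a Schauder fixed-point argument: for $h$ in a bounded closed convex subset of $L^2(\vrho)\cap L^\infty$, solve the quasilinear elliptic equation $f-\lambda L_0\beta_\epsilon(f)=g-\lambda\divvrho(Db(h)h)$ by Minty's monotonicity theorem in $H^1(\vrho)$, exploiting the symmetry and non-negativity of $-L_0$ on $L^2(\vrho)$ (as will be established for $(L,D(L))$) together with the strict monotonicity and linear growth of $\beta_\epsilon$. Compactness of the map $h\mapsto f$ follows from Rellich-type embeddings and the continuity and boundedness of $b$; this yields a fixed point $f=f_\epsilon$.

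Next I would derive estimates uniform in $\epsilon$. Testing with a Stampacchia truncation of the form $\operatorname{sign}(f)(|f|-M)^+\vrho$ produces the $L^\infty$-bound \eqref{infty-bound-statement}, where decomposing $\divvrho(Db(f)f)$ into a pure-divergence part plus a zeroth-order part in $\divvrho D$ (followed by Young's inequality on the square-root term) gives the precise constant on the right via $|D|,(\divvrho D)^-\in L^\infty$. Testing the difference of two solutions with a smooth approximation $X_\delta(f_1-f_2)\vrho$ of the sign function, discarding the second-order contribution by monotonicity of $\beta_\epsilon$ and integrating by parts in the first-order term, yields the $L^1(\vrho)$-contraction; the constraint $\lambda<\lambda_0$ is precisely what is required to absorb the non-monotone contributions on the left. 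Combining contraction and compactness enables the passage $\epsilon\downarrow 0$, with a Minty weak--strong trick to identify $\lim_\epsilon\beta_\epsilon(f_\epsilon)=\beta(f)$; the limit $f$ solves the original resolvent equation and, by contraction, $J_\lambda g := f$ is well defined, proving (i) and the contraction part of (ii).

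The remaining items follow quickly. The resolvent identity \eqref{resolvent-eq-statement} is an algebraic consequence of the definition $g=f+\lambda_2 A_0 f$. Nonnegativity of $J_\lambda g$ for $g\geq 0$ is obtained by testing with a smooth approximation of $-\chi_{\{f<0\}}\vrho$, using $\beta(0)=0$ and monotonicity, while integrating the equation against $\vrho dx$ and noting that both $L_0\beta(f)$ and $\divvrho(Db(f)f)$ have zero $\vrho dx$-integral yields mass preservation and therefore \eqref{prop-measure-lemma}. For \eqref{eq:density-of-L1-Linfty-in-DA}, a direct computation shows every $g\in C^\infty_c$ lies in $D(A_0)$ with $|A_0 g|_{1,\vrho}\leq C(|g|_{H^2(\vrho)}+|g|_{W^{2,1}(\vrho)})$, so uniqueness gives $J_\lambda(g+\lambda A_0 g)=g$ and the contraction yields $|J_\lambda g - g|_{1,\vrho}\leq \lambda|A_0 g|_{1,\vrho}$. \textbf{The main obstacle} is closing the $L^1(\vrho)$-contraction under the non-monotone unbounded drift $-\nabla\Phi\cdot\nabla\beta(\cdot)$ together with the convective term $\divvrho(Db(\cdot)\,\cdot)$, which is exactly where the algebraic form of $\lambda_0$ is forced, together with the Minty identification in the possibly degenerate limit $\beta_\epsilon\to\beta$.
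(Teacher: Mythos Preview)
Your overall architecture---regularize, solve the approximate resolvent, derive uniform estimates, pass to the limit---matches the paper's, but there are two concrete gaps and one misplacement worth flagging.

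\textbf{Regularization of $\beta$.} Your choice $\beta_\epsilon(r)=\beta(r)+\epsilon r$ is \emph{not} globally Lipschitz and does not have linear growth under (H2), which only asks $\beta\in C^2$ with $\beta'(r)>0$ for $r\neq 0$ (think of $\beta(r)=r^3$). Without a Lipschitz approximation you cannot run the Hilbert-space existence step in $L^2(\vrho)$, since $u\mapsto\beta_\epsilon(u)$ does not map $L^2(\vrho)$ to itself. The paper instead uses the Yosida approximation $\tilde\beta_\varepsilon(r)=\beta((I+\varepsilon\beta)^{-1}r)+\varepsilon r$, which is globally bi-Lipschitz with inverse Lipschitz constant $\varepsilon$; this is essential both for existence and for the later compactness argument. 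You would need to make the same change.

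\textbf{Where $\lambda_0$ enters.} You attribute the constraint $\lambda<\lambda_0$ to closing the $L^1(\vrho)$-contraction against the convective term. In fact the $L^1(\vrho)$-contraction holds (at the approximate level) first only for small $\lambda<\lambda_\varepsilon$, and is then extended to \emph{all} $\lambda>0$ by an abstract resolvent argument (Proposition 3.3 in Barbu's book). The explicit threshold $\lambda_0$ arises solely in the $L^\infty$-bound: one shows $u_\varepsilon\leq |f|_\infty+M_\varepsilon$ with $M_\varepsilon=|(\divvrho D_\varepsilon)^-|_\infty^{1/2}|f|_\infty$, and the inequality needed for this to go through is exactly $\lambda<\lambda_0$.

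\textbf{Passage to the limit.} A Minty weak--strong argument for the $\beta$-part alone does not suffice, because you must also pass to the limit in the non-monotone term $D_\varepsilon b_\varepsilon(u_\varepsilon)u_\varepsilon$, and weak $L^2(\vrho)$-convergence of $u_\varepsilon$ is not enough for that. The paper circumvents this by proving \emph{pointwise a.e.} convergence of $u_\varepsilon$: testing the approximate equation with $u_\varepsilon$ yields a uniform $H^1(\vrho)$-bound on the auxiliary quantity $a(g_\varepsilon(u_\varepsilon))$, where $a(r)=\int_0^r\frac{\beta'}{1+\beta'}$ and $g_\varepsilon=(I+\varepsilon\beta)^{-1}$; Rellich in $L^2_{\mathrm{loc}}$ then gives a.e.\ convergence of $u_\varepsilon$, after which all nonlinearities pass to the limit by dominated convergence and the uniform $L^\infty$-bound. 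Your Schauder route to approximate existence is a legitimate alternative to the paper's direct monotone-operator surjectivity, but you will still need some form of this strong-convergence step to close the argument.
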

The proof of Lemma \ref{main-lemma} is given in the next section. In the rest of the present section, we prove Theorem \ref{main-thm} as follows.
Define the operator $A: D(A)\subseteq L^1(\vrho)\cap L^\infty\to L^1(\vrho)\cap L^\infty$ via
\begin{align}\label{def:op-A}
	A(f) &:= A_0(f), \\
\notag 	D(A) &:= \big\{J_\lambda f, f \in L^1(\vrho)\cap L^\infty\big\},
\end{align}
for any $0<\lambda < \lambda_0$. Indeed, by \eqref{resolvent-eq-statement}, $D(A)$ does not depend on $\lambda$. Any generalized solution of
\begin{equation}\label{eq:equation-for-A}
	\frac{d}{dt}u +Au = 0
\end{equation}
is also a generalized solution to the same equation with $A_0$ replacing $A$, and hence a generalized solution to \eqref{equation-main}.
By definition, $A$ is a restriction of $A_0$ such that $R(I+\lambda A) = R(I+\lambda A_0)  = L^1(\vrho)\cap L^\infty$ for all $0<\lambda<\lambda_0$, but in contrast to $A_0$, $A$ is defined such that $I+\lambda A$ is bijective from $D(A)$ to $L^1(\vrho)\cap L^\infty$.
Indeed, we have the following lemma.
\begin{lem}\label{final-lem}
For each $0<\lambda<\lambda_0$, $I+\lambda A$ is bijective from $D(A)$ to $L^1(\vrho)\cap L^\infty$ with inverse $J_\lambda$, and 
\begin{equation*}\label{non-expansive-eq}
|(I+\lambda A)^{-1} f- (I+\lambda A)^{-1} g|_{1,\vrho} \leq |f-g|_{1,\vrho}, \quad \forall f,g \in L^1(\vrho)\cap L^\infty.
\end{equation*}
Moreover, \eqref{resolvent-eq-statement}-\eqref{prop-measure-lemma} hold with $(I+\lambda A)^{-1}$ instead of $J_\lambda$, and $L^1(\vrho)\cap L^\infty \subseteq \overline{D(A)}$, where $\overline{C}$ denotes the $L^1(\vrho)$-closure of a set $C \subseteq L^1(\vrho)$.
\end{lem}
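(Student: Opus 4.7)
The lemma is essentially a bookkeeping consequence of Lemma \ref{main-lemma}: the operator $A$ is defined precisely so that its resolvent, restricted to $D(A) = J_\lambda(L^1(\vrho) \cap L^\infty)$, becomes a genuine inverse of $I+\lambda A$. The plan is to first establish bijectivity, then transfer the quantitative estimates from $J_\lambda$ to $(I+\lambda A)^{-1}$, and finally deduce the density of $D(A)$ in $L^1(\vrho)\cap L^\infty$ from \eqref{eq:density-of-L1-Linfty-in-DA}.

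For bijectivity of $I+\lambda A: D(A) \to L^1(\vrho)\cap L^\infty$ at each $\lambda \in (0,\lambda_0)$: surjectivity is immediate, since for $f \in L^1(\vrho)\cap L^\infty$ the element $J_\lambda f$ lies in $D(A)$ by definition and satisfies $(I+\lambda A)(J_\lambda f) = (I+\lambda A_0)(J_\lambda f) = f$, using that $A_0$ is single-valued and $J_\lambda f \in (I+\lambda A_0)^{-1} f$. For injectivity, if $h_1,h_2 \in D(A)$ both map to $f$, I would write $h_i = J_\lambda f_i$ for some $f_i \in L^1(\vrho)\cap L^\infty$; then $f_i = (I+\lambda A_0)(J_\lambda f_i) = (I+\lambda A) h_i = f$, forcing $h_1 = h_2 = J_\lambda f$. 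This simultaneously identifies $(I+\lambda A)^{-1} = J_\lambda$ on $L^1(\vrho)\cap L^\infty$.

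With the inverse identified, the $L^1(\vrho)$-contraction, the resolvent identity \eqref{resolvent-eq-statement}, the $L^\infty$-bound \eqref{infty-bound-statement}, and the preservation of $\Pscr_\infty(\vrho)$ from \eqref{prop-measure-lemma} transfer verbatim from Lemma \ref{main-lemma}(ii) to $(I+\lambda A)^{-1}$, because the two maps coincide. The independence of $D(A)$ from the choice of $\lambda \in (0,\lambda_0)$, already invoked in the definition of $A$, is a direct consequence of \eqref{resolvent-eq-statement}: that identity rewrites $J_{\lambda_2} f$ as $J_{\lambda_1}$ applied to a suitable element of $L^1(\vrho)\cap L^\infty$, and symmetrically.

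For the density $L^1(\vrho)\cap L^\infty \subseteq \overline{D(A)}$ (closure in $L^1(\vrho)$), I would run a diagonal argument based on \eqref{eq:density-of-L1-Linfty-in-DA}. Given $g \in L^1(\vrho)\cap L^\infty$, pick $g_n \in C^\infty_c$ with $g_n \to g$ in $L^1(\vrho)$, then choose $\lambda_n \in (0,\lambda_0)$ so that
\begin{equation*}
C\lambda_n\bigl(|g_n|_{H^2(\vrho)} + |g_n|_{W^{2,1}(\vrho)}\bigr) < \tfrac{1}{n}.
\end{equation*}
Then $J_{\lambda_n} g_n \in D(A)$ and, by the triangle inequality combined with \eqref{eq:density-of-L1-Linfty-in-DA}, $|J_{\lambda_n}g_n - g|_{1,\vrho} \to 0$. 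No serious obstacle is anticipated; the only delicate point is to keep in mind that $J_\lambda$ must be treated as a fixed single-valued selection from the a priori set-valued inverse of $I+\lambda A_0$, and that the restriction of the domain from $D(A_0)$ down to $D(A) = J_\lambda(L^1(\vrho)\cap L^\infty)$ is exactly what promotes this selection to a genuine two-sided inverse of $I+\lambda A$.
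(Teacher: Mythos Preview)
Your proposal is correct and follows essentially the same approach as the paper: the bijectivity, contraction, and transfer of \eqref{resolvent-eq-statement}--\eqref{prop-measure-lemma} are read off directly from Lemma \ref{main-lemma} (the paper simply calls these ``immediate consequences''), and the density $L^1(\vrho)\cap L^\infty \subseteq \overline{D(A)}$ is obtained from \eqref{eq:density-of-L1-Linfty-in-DA}. The only cosmetic difference is that the paper phrases the density step as ``$\lambda\to 0$ in \eqref{eq:density-of-L1-Linfty-in-DA} gives $C^\infty_c\subseteq\overline{D(A)}$, then use density of $C^\infty_c$ in $L^1(\vrho)\cap L^\infty$'', whereas you package the same two ingredients into a single diagonal sequence; both arguments are equivalent.
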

\begin{proof}[Proof of Lemma \ref{final-lem}]
	All assertions but the final one are immediate consequences of Lemma \ref{main-lemma}. For the final assertion, letting $\lambda \longrightarrow 0$ in \eqref{eq:density-of-L1-Linfty-in-DA}, the definition of $D(A)$ yields $C^\infty_c\subseteq\overline{ D(A)}$, and hence the claim follows from the density of $C^\infty_c$ in $L^1(\vrho)\cap L^\infty$ with respect to $|\cdot|_{1,\vrho}$.
\end{proof}
Then the proof of Theorem \ref{main-thm}  is obtained as follows.
\\
\\
\textit{Proof of Theorem \eqref{main-thm}.}
The existence of a semigroup $(S(t))_{t\geq 0}$ of $L^1(\vrho)$-contractions on $L^1(\vrho)\cap L^\infty$ such that $t\mapsto S(t)u_0$ is a generalized solution to \eqref{eq:equation-for-A} (and hence to \eqref{equation-main}) with initial datum $u_0$ for each $u_0 \in L^1(\vrho)\cap L^\infty$ and 
\begin{equation}\label{eq:conv-exp-form-proof-mainthm}
S(t)u_0 = \lim_{n \to \infty}\bigg(I+\frac t n A\bigg)^{-n}u_0 = \lim_{n \to \infty}(J_{\frac t n})^{n}u_0 \text{ in }L^1(\vrho)\text{ locally uniformly in }t\geq 0
\end{equation} follows from Lemma \ref{final-lem} and the Crandall-Liggett theory for nonlinear evolution equations in Banach spaces, see \cite[Thm.4.3]{B12-Mon-Op-book}. Indeed, even though Lemma \ref{final-lem} does not imply $m$-accretivity of $(A,D(A))$ in $L^1(\vrho)$, the proof of \cite[Thm.4.3.]{B12-Mon-Op-book} shows that it still implies the claims above. By \eqref{infty-bound-statement}, standard arguments entail \eqref{eq:exp-bound-infty-norm}. Moreover, if $u_0 \in \Pscr_\infty(\vrho)$, \eqref{prop-measure-preserving} follows from \eqref{prop-measure-lemma} and \eqref{eq:conv-exp-form-proof-mainthm}. Since $t \mapsto u(t)\vrho dx$ is weakly continuous due to the continuity of $t\mapsto u(t)$ in $L^1(\vrho)$, it remains to prove that $t\mapsto u(t)\vrho dx$ is also a distributional solution. This we prove after the proof of Lemma \ref{main-lemma} at the end of Section \ref{sect:proof-main-lemma}.

\begin{rem}\label{rem:rem}
	Let us comment on the question of uniqueness of $u$ in the class of generalized and distributional solutions to \eqref{equation-main}. 
	\begin{enumerate}
		\item [(i)] As said before, Lemma \ref{final-lem} does not yield $m$-accretivity of $(A,D(A))$ in either $L^1(\vrho)$ or $L^1(\vrho)\cap L^\infty$. Hence the Crandall-Liggett theorem does not yield uniqueness of $u$ in the class of generalized solutions to \eqref{equation-main}. $u$ is, however, the unique generalized solution such that the approximations $u_h$ in Definition \ref{def:gen-sol} can be chosen so that $\sup_{0< h<h_0}|u_h(t)|_\infty$ is locally bounded in $t$ for some $h_0 >0$. Indeed, this follows by Lemma \ref{final-lem} and an inspection of the uniqueness part of the proof of \cite[Thm.4.1]{B12-Mon-Op-book}.
		However, even with $m$-accretivity of $(A,D(A))$, uniqueness of $u$ would only hold in the class of generalized solutions \emph{with respect to $(A,D(A))$}, which is a restriction of $(A_0,D(A_0))$, but not in the class of \emph{all} generalized solutions.
		\item [(ii)] For $\vrho \equiv1$ and under additional assumptions on the coefficients, it is proven in  \cite{BR22} that $A_0$ itself is $m$-accretive. In this case, the generalized solutions are unique. We believe that such a result can be extended to more general weights $\vrho$, but we do not pursue this question in the present paper.
	\end{enumerate}
\end{rem}
For the proof of Lemma \ref{main-lemma}, we need the following result.
\begin{lem}\label{lem-all-symmetry-eq-L}
	\begin{enumerate}
		\item [(i)] 	Let $v\in D(L)$ and $w \in H^1(\vrho)$. Then $	\int_{\R^d} Lv \,w \,\vrho dx = -\int_{\R^d} \nabla v \cdot \nabla w \,\vrho dx.$
	\item[(ii)] If either 
	\begin{itemize}
		\item [(a)] $v,w\in D(L)$ or
		\item[(b)] $v\in L^1_{\textup{loc}}$ such that $L_0 v \in L^1_{\textup{loc}}$ and $w \in C^2_c$, then $	\int_{\R^d} w\,L_0v \, \vrho dx = \int_{\R^d} v Lw \,\vrho dx.$
	\end{itemize}
	\end{enumerate}
In particular, $(L,D(L))$ is symmetric on $L^2(\vrho)$.
\end{lem}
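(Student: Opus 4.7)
The plan is threefold: establish (i) via an integration-by-parts with cut-offs that handles the lack of compact support for $w$; deduce (ii)(a) as an immediate corollary of (i) applied twice; and prove (ii)(b) by a direct distributional computation showing $L_0^*(w\vrho)=\vrho Lw$.

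For (i), I would first exploit $\nabla\vrho=-\vrho\nabla\Phi$ (which holds since $\vrho=e^{-\Phi}$) to rewrite
\begin{equation*}
(Lv)\vrho=\vrho\Delta v+\nabla\vrho\cdot\nabla v=\divv(\vrho\nabla v),
\end{equation*}
where the right-hand side is classically in $L^2$ because $v\in H^2(\vrho)$ and $\vrho\le 1$. I then pick cut-offs $\chi_n\in C^\infty_c$ with $\chi_n\equiv 1$ on $B_n$ and $|\nabla\chi_n|_\infty\to 0$, and apply the classical integration by parts against the compactly supported function $w\chi_n$; this is legitimate because (H1) gives $\inf_K\vrho>0$ on every compact $K$, so $H^m(\vrho)\subseteq H^m_{\loc}$ for $m=1,2$. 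This yields
\begin{equation*}
\int_{\R^d}\divv(\vrho\nabla v)\,w\chi_n\,dx=-\int_{\R^d}\vrho\nabla v\cdot\nabla w\,\chi_n\,dx-\int_{\R^d}\vrho\nabla v\cdot\nabla\chi_n\,w\,dx.
\end{equation*}
Dominated convergence (with $L^1$-majorant $\vrho|\nabla v||\nabla w|$ furnished by Cauchy--Schwarz) sends the first right-hand term to $-\int\nabla v\cdot\nabla w\,\vrho dx$, while the second is dominated by $|\nabla\chi_n|_\infty|\nabla v|_{2,\vrho}|w|_{2,\vrho}\to 0$, giving (i).

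Part (ii)(a) is then mechanical: since $v,w\in D(L)\subseteq H^1(\vrho)$, applying (i) with the pair $(v,w)$ and then with $(w,v)$ yields
\begin{equation*}
\int_{\R^d}w\,L_0 v\,\vrho dx=-\int_{\R^d}\nabla v\cdot\nabla w\,\vrho dx=\int_{\R^d}v\,L_0 w\,\vrho dx,
\end{equation*}
which in particular gives symmetry of $(L,D(L))$ on $L^2(\vrho)$. For (ii)(b), I would use the distributional formulation of $L_0v$. Since $w\in C^2_c$ and $\vrho\in C^2$, the function $\psi:=w\vrho$ lies in $C^2_c$; mollifying $\psi_n:=\psi*\eta_{1/n}\in C^\infty_c$ with $\psi_n\to\psi$ in $C^2$-norm and using $v,L_0 v\in L^1_{\loc}$ to pass to the limit, the distributional identity $\langle L_0 v,\psi\rangle=\langle v,L_0^*\psi\rangle$ with $L_0^*\varphi:=\Delta\varphi+(\Delta\Phi)\varphi+\nabla\Phi\cdot\nabla\varphi$ extends from $\psi\in C^\infty_c$ to $\psi\in C^2_c$. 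It then remains to compute $L_0^*(w\vrho)$: using $\nabla\vrho=-\vrho\nabla\Phi$ and $\Delta\vrho=\vrho(|\nabla\Phi|^2-\Delta\Phi)$, expansion gives cancellation of the $\Delta\Phi$ and $|\nabla\Phi|^2$ contributions and leaves $L_0^*(w\vrho)=\vrho(\Delta w-\nabla\Phi\cdot\nabla w)=\vrho Lw$, which is the claim.

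The main technical obstacle is the boundary term in (i): one must verify that the $\nabla\chi_n$-contribution vanishes using only $v\in H^2(\vrho)$ and $w\in H^1(\vrho)$, without any compactness or decay hypothesis on $w$. All other steps are either direct consequences of (i) or algebraic identities simplifying $L_0^*(w\vrho)$.
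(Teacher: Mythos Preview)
Your proof is correct and follows essentially the same route as the paper: (ii)(a) and (ii)(b) are handled identically (apply (i) twice with roles exchanged; compute $L_0^*(w\vrho)=\vrho Lw$ via $\nabla\vrho=-\vrho\nabla\Phi$). The only minor variant is in (i), where the paper first proves the identity for $w\in C^1_c$ by a single integration by parts and then invokes density of $C^1_c$ in $H^1(\vrho)$, whereas you keep $w\in H^1(\vrho)$ fixed and push the approximation onto cut-offs $\chi_n$; both arguments hinge on the same divergence identity $(Lv)\vrho=\divv(\vrho\nabla v)$ and the same Cauchy--Schwarz control of the error, so this is a cosmetic difference.
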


\begin{proof}
	\begin{enumerate}
	\item[(i)] For $w \in C^1_c$, the claim follows from integration by parts to $(\Delta v)w\vrho$, noting that $w\vrho \in C^1_c$ and $-\nabla v \cdot \nabla (w \vrho) = -\nabla v \cdot (\nabla w -w\nabla \Phi)\vrho$. Since $C^1_c$ is dense in $H^1(\vrho)$, the claim follows.
	\item [(ii)] For (a), repeat the proof of (i), then reverse the roles of $v$ and $w$ to get
	\begin{equation*}
	\int_{\R^d} w \,L_0v \,\vrho dx = 	\int_{\R^d} wLv \,\vrho dx = -\int_{\R^d}\nabla v\cdot \nabla w \,\vrho dx = \int_{\R^d} vLw \,\vrho dx.
	\end{equation*}
For (b), as a distribution $L_0v$ acts on $w\vrho \in C_c^{2}$  via $\int w\, L_0v \,\vrho dx$. Also,  $\Delta v$ and $-\nabla \Phi \cdot \nabla v$ are distributions, and the action of their sum applied to $\omega \vrho $ is
\begin{align*}
	\int_{\R^d} v([\Delta(w \vrho) + \divv(w\vrho \nabla \Phi) ] dx = \int_{\R^d} v [\Delta w-\nabla \Phi \cdot \nabla w]\vrho dx = \int_{\R^d} v Lw \,\vrho dx.
\end{align*}
\end{enumerate}
\end{proof}
\section{Proof of Lemma \ref{main-lemma}}\label{sect:proof-main-lemma}

Let $\lambda>0$.
The definition of $(A_0,D(A_0))$ entails $R(I+\lambda A_0) \subseteq L^1(\vrho)\cap L^\infty$. Let us prove the reverse inclusion.
In order to solve 
\begin{equation*}\label{main-eq-lemma}
u+\lambda A_0u = f,
\end{equation*}
we approximate $\beta, D$ and $b$ as follows. For $\varepsilon >0$, consider
\begin{equation*}
	\tilde{\beta}_{\varepsilon}(r):= \beta_{\varepsilon}(r)+\varepsilon r := \frac 1 \varepsilon\big(r- (I+\varepsilon\beta)^{-1}r\big)+\varepsilon r = \beta\big((I+\varepsilon\beta)^{-1}r\big)+\varepsilon r, \,r\in \R.
\end{equation*}
It is readily seen that $\tilde{\beta}_{\varepsilon}$ is strictly increasing and, for all $r,t \in \R$,
\begin{align}
	&|\tilde{\beta}_{\varepsilon}(r)-\tilde{\beta}_{\varepsilon}(t)| \leq \big(\varepsilon+ 2 \varepsilon^{-1}\big)|r-t|, \label{betatildeepsilon-lipschitz}\\
	&|\tilde{\beta}_{\varepsilon}(r)-\tilde{\beta}_{\varepsilon}(t)| \geq \varepsilon|r-t|, \label{betatildeepsilon-lipschitz-2}\\
	& \tilde{\beta}_{\varepsilon}(0) = 0\label{0 in 0},
\end{align}
i.e. $\tilde{\beta}_{\varepsilon}:\R \to \R$ is a bijective $(\varepsilon+\frac 2 \varepsilon)$-Lipschitz map with $\frac 1 \varepsilon$-Lipschitz inverse.  Set
\begin{equation*}
	b_\varepsilon(r):= \frac{(b*\rho_\varepsilon)(r)}{1+\varepsilon|r|}, \quad r \in \R,
\end{equation*}
and $
D_\varepsilon := \eta_\varepsilon D,	 
$
where $\rho_\varepsilon(r):= \varepsilon^{-1}\rho(\frac r \varepsilon), \rho \in C^\infty_c, \rho \geq 0$ is a standard mollifier, and $(\eta_\varepsilon)_{\varepsilon>0} \subseteq C^1_c$ is a family of functions with $0\leq \eta_\varepsilon \leq 1$, $|\nabla \eta_\varepsilon|\leq 1$ and $\eta_\varepsilon \equiv 1$ on $B_{\varepsilon^{-1}}(0)$ (the Euclidean ball with radius $\varepsilon^{-1}$ centered at $0$). Consequently, $b^*_\varepsilon(r):= b_\varepsilon(r)r$ is bounded and Lipschitz, and $b_\varepsilon(r) \longrightarrow b(r)$ pointwise as $\varepsilon\to 0$. Moreover, by (H4),
$$\divvrho D_\varepsilon  = \eta_\varepsilon \divvrho D + \nabla \eta_\varepsilon \cdot D \in L^2(\vrho) + L^\infty.$$
Consider the approximate equation
\begin{equation}\label{approx-eq-eps}
u- \lambda L \tilde{\beta}_{\varepsilon}(u) + \lambda \varepsilon\tilde{\beta}_{\varepsilon}(u)+\lambda \divvrho(D_\varepsilon b^*_\varepsilon(u))= f.
\end{equation}
To solve it, first let $f \in L^2(\vrho)$, consider in $L^2(\vrho)$
\begin{equation}\label{eq-1}
	(\varepsilon I- L)^{-1} u +\lambda \tilde{\beta}_{\varepsilon}(u) + \lambda (\varepsilon I - L)^{-1}\divvrho(D_\varepsilon b^*_\varepsilon(u))= (\varepsilon I - L)^{-1}f,
\end{equation}
and define on $L^2(\vrho)$ the maps $F_\varepsilon$, $G^1_\varepsilon$, $G^2_\varepsilon$
\begin{align*}
	F_{\varepsilon}: u \mapsto 	(\varepsilon I- L)^{-1}u, \quad G^1_\varepsilon: u \mapsto \lambda \tilde{\beta}_\varepsilon(u),\quad 	G^2_\varepsilon: u \mapsto \lambda (\varepsilon I - L)^{-1}\divvrho(D_\varepsilon b^*_\varepsilon(u)),
\end{align*}
so that \eqref{eq-1} becomes
\begin{equation*}
	(F_\varepsilon+G^1_\varepsilon+G^2_\varepsilon)u = (\varepsilon I-L)^{-1}f.
\end{equation*}
By \eqref{betatildeepsilon-lipschitz}, $G^1_\varepsilon$ is $L^2(\vrho)$-continuous, and \eqref{betatildeepsilon-lipschitz-2} and the monotonicity of $\beta_\varepsilon$ entail
\begin{equation*}
	\langle G^1_\varepsilon (u-v),u-v\rangle_{2,\vrho}\geq \lambda\varepsilon|u-v|^2_{2,\vrho}, \quad \forall u,v \in L^2(\vrho).
\end{equation*}
In \cite{DaPL04} it is shown that the resolvent set of $(L,D(L))$
contains $(0,\infty)$ and, moreover,
\begin{equation*}
|(\varepsilon I-L)^{-1}u|_{2,\vrho} \leq \varepsilon^{-1}|u|_{2,\vrho}, \quad \forall u \in L^2(\vrho).
\end{equation*}
In particular, $F_\varepsilon$ is continuous on $L^2(\vrho)$. Furthermore, Lemma \ref{lem-all-symmetry-eq-L} (i) yields
\begin{equation*}
\langle F_\varepsilon (u), u \rangle_{2,\vrho} = \varepsilon |(\varepsilon I -L )^{-1} u|^2_{2,\vrho}+|\nabla (\varepsilon I - L)^{-1} u|^2_{2,\vrho},\quad \forall u \in L^2(\vrho).
\end{equation*}
Since $r \mapsto b^*_\varepsilon(r)$ is Lipschitz and $D_\varepsilon\in L^\infty$, $u \mapsto D_\varepsilon b^*_\varepsilon(u)$ is $L^2(\vrho)$-continuous and hence $G^2_\varepsilon$ is $L^2(\vrho)$-continuous as well. Moreover, denoting by $C_\varepsilon>0$ a constant depending on $|D_\varepsilon|_\infty$ and the Lipschitz constant of $b^*_\varepsilon$, we have 
\begin{align*}
	\langle G^2_\varepsilon(u)-G^2_\varepsilon(v),u-v\rangle_{2,\vrho} &= -\lambda \langle D_\varepsilon ( b^*_\varepsilon(u)-b^*_\varepsilon(v)) , \nabla (\varepsilon I - L)^{-1}(u-v)\rangle_{2,\vrho}
	\\& \geq -C_\varepsilon \lambda |u-v|_{2,\vrho}|\nabla (\varepsilon I - L)^{-1} (u-v)|_{2,\vrho},\quad \forall u,v \in L^2(\vrho).
\end{align*}
Altogether, these estimates lead to 
\begin{align*}
&	\langle F_\varepsilon(u-v)+G^1_\varepsilon(u)-G^1_\varepsilon (v)+G^2_\varepsilon (u)-G^2_\varepsilon (v), u-v\rangle_{2,\vrho}
\\& \geq \lambda\varepsilon|u-v|^2_{2,\vrho} + \varepsilon |(\varepsilon I -L )^{-1} (u-v)|^2_{2,\vrho}+|\nabla (\varepsilon I - L)^{-1} (u-v)|^2_{2,\vrho} \\& \quad \quad\quad-C_\varepsilon \lambda |u-v|^2_{2,\vrho}|\nabla (\varepsilon I - L)^{-1} (u-v)|^2_{2,\vrho}
 \geq \frac{\lambda \varepsilon}{2}|u-v|^2_{2,\vrho},
\end{align*}
provided $0< \lambda < \lambda_\varepsilon$ for $\lambda_\varepsilon>0$ sufficiently small,
i.e. $F_\varepsilon+G^1_\varepsilon+G^2_\varepsilon$ is coercive on $L^2(\vrho)$. Since the continuity and monotonicity (= accretivity) of $F_\varepsilon+G^1_\varepsilon+G^2_\varepsilon$ on $L^2(\vrho)$ also imply its $m$-accretivity, it follows by \cite[Cor.2.2.]{B12-Mon-Op-book} that $F_\varepsilon+G^1_\varepsilon+G^2_\varepsilon$ is surjective (hence bijective) on $L^2(\vrho)$. Therefore, for each $f \in L^2(\vrho)$, \eqref{eq-1} has a unique solution $u_\varepsilon (f)= u_\varepsilon(\lambda, f)$ in $L^2(\vrho)$. Since for this solution the right-hand side and the first summand on the left-hand side of \eqref{eq-1} are in $D(L) \subseteq H^2(\vrho)$, and $G^2_\varepsilon(u_\varepsilon) \in H^1(\vrho)$, it follows that $\tilde{\beta}_\varepsilon(u_\varepsilon)\in H^1(\vrho)$. Since $\tilde{\beta}_\varepsilon$ has a Lipschitz inverse, we obtain $u_\varepsilon \in H^1(\vrho)$. Thus also $b^*_\varepsilon(u_\varepsilon) \in H^1(\vrho)$. Since 
$$\divvrho(D_\varepsilon b^*_\varepsilon(u_\varepsilon)) = b^*_\varepsilon(u_\varepsilon)\divvrho D_\varepsilon + D_\varepsilon \cdot \nabla b^*_\varepsilon(u_\varepsilon)$$
and $b^*_\varepsilon(u_\varepsilon) \in L^2(\vrho)\cap L^\infty$, $\divvrho D_\varepsilon \in L^2(\vrho) + L^\infty$,  $D_\varepsilon\in L^\infty$, we obtain $\divvrho(D_\varepsilon b^*_\varepsilon(u_\varepsilon)) \in L^2(\vrho)$, and therefore even $G^2_\varepsilon(u_\varepsilon) \in D(L)$. Hence, all terms of \eqref{eq-1} are in $D(L)$, and applying $(\varepsilon I - L)$ to both sides shows that $u_\varepsilon$ solves \eqref{approx-eq-eps} in $L^2(\vrho)$.

Next, we prove $u_\varepsilon \in L^1(\vrho)$ if $f\in L^1(\vrho)\cap L^2(\vrho)$ and that for $0<\lambda < \lambda_\varepsilon$
\begin{equation*}
	|u_\varepsilon(\lambda, f_1)-u_\varepsilon(\lambda,f_2)|_{1,\vrho}\leq |f-g|_{1,\vrho},\quad \forall f_1,f_2 \in L^1(\vrho).
\end{equation*}
To this end, set $u:= u_1 - u_2:= u_\varepsilon(\lambda,f_1)-u_\varepsilon(\lambda,f_2)$ and $f:= f_1-f_2$. Then, by \eqref{approx-eq-eps},
\begin{align}\label{eq-2}
	u = \lambda L[\tilde{\beta}_{\varepsilon}(u_1)-\tilde{\beta}_{\varepsilon}(u_2)]-\lambda \varepsilon[\tilde{\beta}_{\varepsilon}(u_1)-\tilde{\beta}_{\varepsilon}(u_2)]-\lambda \divvrho(D_\varepsilon[b^*_\varepsilon(u_1)-b^*_\varepsilon(u_2)])+f.
\end{align}
Consider, for $\delta>0$, the Lipschitzian function $\chi_\delta:\R \to \R$,
\begin{align*}
	\chi_\delta(r):=
	\begin{cases}
	1,& r \geq \delta,\\
	\frac r \delta,& |r| < \delta,\\
	-1,& r < -\delta,
	\end{cases}
\end{align*}
i.e. $\chi_\delta\to \text{sign}$ pointwise for $\delta \to 0$. Multiplying both sides of \eqref{eq-2} by $\Lambda_\delta := \chi_\delta(\tilde{\beta}_{\varepsilon}(u_1)-\tilde{\beta}_{\varepsilon}(u_2))\in H^1(\vrho)$, integrating with respect to $\vrho dx$ and using $\Lambda_{\delta} (\tilde{\beta}_{\varepsilon}(u_1)-\tilde{\beta}_{\varepsilon}(u_2))\geq 0$ implies
\begin{align*}\label{eq-3}
	\int_{\R^d} u \Lambda_\delta \vrho dx \leq \lambda\int_{\R^d} L[\tilde{\beta}_{\varepsilon}(u_1)-\tilde{\beta}_{\varepsilon}(u_2)]\Lambda_\delta \,\vrho dx+ \int_{\R^d}\Lambda_\delta \divvrho(D_\varepsilon [b^*_\varepsilon(u_1)-b^*_\varepsilon(u_2)])\,\vrho dx+\int_{\R^d} f \Lambda_\delta \,\vrho dx .
\end{align*}
By Lemma \ref{lem-all-symmetry-eq-L} (i) and since $\chi_\delta'(r) = \mathds{1}_{\{|r|< \delta\}}\frac 1 \delta$, the first integral on the right-hand side of the previous inequality equals $-\frac 1 \delta \int_{\{|\tilde{\beta}_{\varepsilon}(u_1)-\tilde{\beta}_{\varepsilon}(u_2)|< \delta\}}|\nabla [\tilde{\beta}_{\varepsilon}(u_1)-\tilde{\beta}_{\varepsilon}(u_2)]|^2\vrho dx \leq 0$. Moreover, for a constant $c_0 >0$,
$$-\int_{\R^d}\Lambda_\delta \divvrho(D_\varepsilon [b^*_\varepsilon(u_1)-b^*_\varepsilon(u_2)])\,\vrho dx \leq \frac{c_0} \delta\int_{\{|\tilde{\beta}_{\varepsilon}(u_1)-\tilde{\beta}_{\varepsilon}(u_2)|< \delta\}}|D_\varepsilon| |u| |\nabla (\tilde{\beta}_\varepsilon(u_1)-\tilde{\beta}_\varepsilon(u_2))|\,\vrho dx.$$
Since $\tilde{\beta}_\varepsilon$ has a Lipschitz inverse, we further bound the right-hand side of the previous estimate by
$$c_0|D_\varepsilon|_{2,\vrho}\bigg(\int_{\{|\tilde{\beta}_{\varepsilon}(u_1)-\tilde{\beta}_{\varepsilon}(u_2)|< \delta\}} |\nabla (\tilde{\beta}_\varepsilon(u_1)-\tilde{\beta}_\varepsilon(u_2))|^2\,\vrho dx\bigg)^{\frac 1 2}\xrightarrow{\delta \to 0}0,$$
since $\nabla (\tilde{\beta}_\varepsilon(u_1)-\tilde{\beta}_\varepsilon(u_2)) = 0$ $\vrho dx $-a.s. on $\{|\tilde{\beta}_{\varepsilon}(u_1)-\tilde{\beta}_{\varepsilon}(u_2)| = 0\}$.
Thus, altogether we obtain by Fatou's lemma and since $|\Lambda_\delta|\leq 1$:
\begin{equation}\label{eq:L1-approx-eps-eq}
\int_{\R^d}|u|\,\vrho dx  \leq |f|_{1,\vrho},
\end{equation}
Since $u_\varepsilon(\lambda, 0) \equiv 0$ for all $0< \lambda < \lambda_\varepsilon$, in particular we have shown $|u_\varepsilon|_{1,\vrho}\leq |f|_{1,\vrho}$ for any $f \in L^1(\vrho)\cap L^2(\vrho)$. By \eqref{betatildeepsilon-lipschitz}, then also $\tilde{\beta}_{\varepsilon}(u_\varepsilon) \in L^1(\vrho)$. Consequently, \eqref{approx-eq-eps} implies $-L\tilde{\beta}_{\varepsilon}(u_\varepsilon)+\divvrho(D_\varepsilon b^*_\varepsilon(u_\varepsilon)) \in L^1(\vrho)$, and in this sense \eqref{approx-eq-eps} holds in $L^1(\vrho)$.


Now let $f \in L^1(\vrho)$ and consider the operator $(A_\varepsilon, D(A_\varepsilon))$
\begin{equation*}\label{def:A_vareps}
A_\varepsilon u := - \Delta \tilde{\beta}_{\varepsilon}(u) +\nabla \Phi \cdot \nabla \tilde{\beta}_{\varepsilon}(u) + \varepsilon\tilde{\beta}_{\varepsilon}(u) + \divvrho(D_\varepsilon b^*_\varepsilon(u)),
\end{equation*}
\begin{equation*}
	D(A_\varepsilon) := \{u \in L^1(\vrho): \, A_\varepsilon u \in L^1(\vrho)\},
\end{equation*}
so that we may rewrite \eqref{approx-eq-eps} as
\begin{equation}\label{approx-eq-with-A_epsilon}
u + \lambda A_\varepsilon u  = f.
\end{equation}
Let $(f_n)_{n \in \N} \subseteq L^1(\vrho)\cap L^2(\vrho)$ such that $f_n \longrightarrow f$ in $L^1(\vrho)$ as $n \to \infty$. By \eqref{eq:L1-approx-eps-eq}, the corresponding solutions $u_n:= u_\varepsilon(\lambda, f_n)$ to \eqref{approx-eq-eps} converge to some $u \in L^1(\vrho)$ as $n \to \infty$, and hence $A_\varepsilon(u_n)$ converges in $L^1(\vrho)$ to some $g$. Since $\tilde{\beta}_\varepsilon(u_n)  \in D(L)$, we have $\tilde{\beta}_\varepsilon(u_\varepsilon), \Delta  \tilde{\beta}_\varepsilon(u_\varepsilon), \nabla \Phi \cdot \nabla \tilde{\beta}_\varepsilon(u_\varepsilon) \in L^1_{\loc}$, and the Lipschitz continuity of $b^*_\varepsilon$ and $\tilde{\beta}_\varepsilon$ yields for all $\varphi \in C^2_c$
	\begin{align*}
		\int_{\R^d} (A_\varepsilon u_n) \, \varphi \,\vrho dx
		&=
		\int_{\R^d} \tilde{\beta}_{\varepsilon}(u_n)\big(-L\varphi+\varepsilon\varphi\big) - D_\varepsilon b^*_\varepsilon(u_n)\cdot \nabla \varphi\,\vrho dx \\&
		\converge-n 	\int_{\R^d} \tilde{\beta}_{\varepsilon}(u)\big(-L\varphi+\varepsilon\varphi\big) - D_\varepsilon b^*_\varepsilon(u)\cdot \nabla \varphi \,\vrho dx\\& \quad \quad \quad \quad =
		{}_{\mathcal{D}'}\langle -\Delta \tilde{\beta}_{\varepsilon}(u)+\nabla \Phi \cdot \nabla \tilde{\beta}_\varepsilon(u)+\varepsilon \tilde{\beta}_\varepsilon(u) + \divvrho(D_\varepsilon b^*_\varepsilon(u)) , \varphi \vrho \rangle_\mathcal{D},
	\end{align*}
	where ${}_{\mathcal{D}'}\langle F ,f \rangle_{\mathcal{D}}$ denotes the dual pairing of a second-order distribution $F$ and $f\in C^2_c$. 
Since $\vrho \in C^2(\R^d)$ and $\vrho>0$, one has $\{\varphi \vrho; \varphi \in C^2_c\} = C^2_c$. Consequently, $u \in D(A_\varepsilon)$ and $g = A_\varepsilon u$, i.e. $u$ solves \eqref{approx-eq-with-A_epsilon}. Denoting this (in general non-unique) solution to \eqref{approx-eq-with-A_epsilon} by $u_\varepsilon(\lambda,f)$, \eqref{eq:L1-approx-eps-eq} implies for all $0< \lambda < \lambda_\varepsilon$
\begin{equation}\label{eq:L1-contract-all-f}
	|u_\varepsilon(\lambda,f)-u_\varepsilon(\lambda,g)|_{1,\vrho}\leq |f-g|_{1,\vrho}, \quad \forall f,g \in L^1(\vrho).
\end{equation}
By \cite[Prop.3.3]{B12-Mon-Op-book}, this implies that for each $\varepsilon>0$ equation \eqref{approx-eq-with-A_epsilon} has a solution $u_\varepsilon(\lambda, f) \in D(A_\varepsilon)$ for all $f \in L^1(\vrho)$ for \textit{all} $\lambda >0$, \eqref{eq:L1-contract-all-f} holds for all $\lambda >0$, and 
\begin{equation}\label{eq:resolvent-eq-epsilon}
	u_\varepsilon(\lambda_2, f) = u_\varepsilon\bigg(\lambda_1, \frac{\lambda_1}{\lambda_2}f + (1-\frac{\lambda_1}{\lambda_2})u_\varepsilon(\lambda_2,f)\bigg),\quad \forall 0<\lambda_1,\lambda_2 < \infty, f \in L^1(\vrho).
\end{equation}
Moreover, similarly to the proof of equality (2.29) in  \cite{BRS23}, one proves $u_\varepsilon(\lambda, f) \in L^1(\vrho)\cap L^2(\vrho)$ for all $f \in L^1(\vrho)\cap L^2(\vrho)$ and all $\lambda >0$. Since we know by the above construction that $u_\varepsilon(\lambda,f) \in H^1(\vrho)$ if $\lambda < \lambda_\varepsilon$ and $f \in L^1(\vrho)\cap L^2(\vrho)$, \eqref{eq:resolvent-eq-epsilon} implies $u_\varepsilon(\lambda, f)\in H^1(\vrho)$ for such $f$ for \textit{all} $\lambda >0$.


From now on, let $f \in L^1(\vrho)\cap L^\infty$ and let $u_\varepsilon = u_\varepsilon(\lambda,f) \in H^1(\vrho)\cap L^1(\vrho)$ be the corresponding solution to \eqref{approx-eq-with-A_epsilon}.
For $0 < \lambda < \lambda_0$, where
$$\lambda_0 = \big[\big(|(\divvrho D)^-+|D| |_\infty+ |(\divvrho D)^-+|D| |_\infty^{\frac 1 2}\big)|b|_\infty\big]^{-1},$$
 we are now going to prove
\begin{equation}\label{Linfty-bound-u_eps}
	\sup_{\varepsilon>0}|u_\varepsilon|_\infty \leq  |f|_\infty (1  +|(\divvrho D)^- + |D| |^{\frac 1 2}_\infty)
\end{equation}
as follows. Setting $M_\varepsilon := |(\divvrho D_\varepsilon)^-|_\infty^{\frac 1 2} |f|_\infty$, one has, since $(\divvrho D_\varepsilon)^- \leq |(\divvrho D)^- + |D| |_\infty$, 
\begin{align*}
	u_\varepsilon &- |f|_\infty  -M_\varepsilon -\lambda L\big(\tilde{\beta}_\varepsilon(u_\varepsilon)-\tilde{\beta}_\varepsilon(|f|_\infty+M_\varepsilon)\big) + \lambda \varepsilon \big(\tilde{\beta}_\varepsilon(u_\varepsilon)-\tilde{\beta}_\varepsilon(|f|_\infty+M_\varepsilon)\big) \\&+ \lambda \divvrho \big( D_\varepsilon\big(b_\varepsilon^*(u_\varepsilon)-b_\varepsilon^*(|f|_\infty+M_\varepsilon)\big) \big)\leq f-|f|_\infty-M_\varepsilon + \lambda (\divvrho D_\varepsilon)^-b_\varepsilon^*(|f|_\infty + M_\varepsilon) \leq 0.
\end{align*} Multiplying the above inequality by $\chi_\delta([u_\varepsilon-|f|_\infty + M_\varepsilon]^+)\in H^1(\vrho)$, integrating with respect to $\vrho dx$ and using 
$$\chi_\delta([u_\varepsilon-|f|_\infty-M_\varepsilon]^+)  \lambda \varepsilon \big(\tilde{\beta}_\varepsilon(u_\varepsilon)-\tilde{\beta}_\varepsilon(|f|_\infty+M_\varepsilon)\big) \geq 0$$
as well as
$$-\lambda \int_{\R^d} \chi_\delta([u_\varepsilon-|f|_\infty-M_\varepsilon]^+) L\big(\tilde{\beta}_\varepsilon(u_\varepsilon)-\tilde{\beta}_\varepsilon(|f|_\infty+M_\varepsilon)\big) \,\vrho dx\geq 0,$$
we find
\begin{align*}
	\int_{\R^d} \chi_\delta([u_\varepsilon-|f|_\infty-M_\varepsilon]^+)( u_\varepsilon-|f|_\infty-M_\varepsilon) \,\vrho dx \leq \frac{\lambda}{\delta} \int_{C_\delta}D_\varepsilon\big(b_\varepsilon^*(u_\varepsilon)-b_\varepsilon^*(|f|_\infty+M_\varepsilon)\big)\cdot \nabla u_\varepsilon\,\vrho dx,
\end{align*}
where $C_\delta := \{|f|_\infty  + M_\varepsilon \leq u_\varepsilon \leq |f|_\infty  + M_\varepsilon  + \delta\}$. The right-hand side of the previous inequality is bounded from above by $$\frac{c_\varepsilon\lambda}{\delta}|D_\varepsilon|_{2,\vrho}\bigg(\int_{C_\delta}|u_\varepsilon-|f|_\infty - M_\varepsilon|^2 |\nabla u_\varepsilon|^2\,\vrho dx\bigg)^{\frac 1 2} \leq c_\varepsilon\lambda|D_\varepsilon|_{2,\vrho}\bigg(\int_{C_\delta} |\nabla u_\varepsilon|^2\,\vrho dx\bigg)^{\frac 1 2}\xrightarrow{\delta \to 0}0,$$
where the final convergence follows from $u_\varepsilon \in H^1(\vrho)$ and $\mathds{1}_{C_\delta}\nabla u_\varepsilon\xrightarrow{\delta \to 0}0$ $\vrho dx$-a.s. Consequently,
\begin{align*}
	\int_{\R^d} [u_\varepsilon - |f|_\infty - M_\varepsilon]^+\,\vrho dx \leq \liminf_{\delta \to 0} \int_{\R^d} \chi_\delta([u_\varepsilon-|f|_\infty-M_\varepsilon]^+)( u_\varepsilon-|f|_\infty-M_\varepsilon) \,\vrho dx = 0,
\end{align*}
which implies
\begin{equation*}
	u_\varepsilon \leq |f|_\infty  + M_\varepsilon = |f|_\infty ( 1+ |(\divvrho D_\varepsilon)^-|_\infty^{\frac 1 2}) \leq |f|_\infty (1  +|(\divvrho D)^- + |D| |^{\frac 1 2}_\infty).
\end{equation*}
Similarly, one obtains 
\begin{equation*}
	u_\varepsilon \geq - |f|_\infty (1  +|(\divvrho D)^- + |D| |^{\frac 1 2}_\infty),
\end{equation*}
and so \eqref{Linfty-bound-u_eps} follows.


For the remainder of the proof, we fix $0< \lambda < \lambda_0$, $f \in L^1(\vrho)\cap L^\infty$.
By \eqref{eq:L1-contract-all-f} and \eqref{Linfty-bound-u_eps}, $(u_\varepsilon)_{\varepsilon>0}, u_\varepsilon = u_\varepsilon(\lambda, f)$, is uniformly bounded in each $L^p(\vrho)$, $p \in [1,\infty]$. In particular, there exists an $L^2(\vrho)$-weakly convergent subsequence (for simplicity again denoted $(u_\varepsilon)_{\varepsilon>0}$) and some $u=u(\lambda, f) \in L^2(\vrho)$ such that 
\begin{equation}\label{weak-conv-to-u}
u_\varepsilon  \xrightarrow{\varepsilon \to 0} u \quad L^2(\vrho)-\text{weakly}.
\end{equation}
Moreover,  the uniform $L^\infty$-bound for $\{u_\varepsilon\}_{\varepsilon>0}$ also gives the $\vrho dx$-a.s. estimate
\begin{equation*}
	|\tilde{\beta}_{\varepsilon}(u_\varepsilon)| \leq (\varepsilon+C_{\beta,f})|u_\varepsilon|,
\end{equation*}
where $C_{\beta,f} \in [0,\infty)$ is the Lipschitz constant of $\beta$ on $[-|f|_\infty,|f|_\infty]$. Consequently, $\{\tilde{\beta}_{\varepsilon}(u_\varepsilon)\}_{\varepsilon>0}$ is uniformly $L^p(\vrho)$-bounded for any $p \in [1,\infty]$ as well, so there is $\eta$ such that for all $q \in (1,\infty)$ $\tilde{\beta}_{\varepsilon}(u_\varepsilon) \xrightarrow{\varepsilon\to 0}\eta$ $L^q(\vrho)$-weakly. In fact, also $u_\varepsilon \longrightarrow u$ pointwise $\vrho dx$-a.s., as $\varepsilon \to 0$, as the following argument shows.
Considering \eqref{approx-eq-with-A_epsilon} with $u_\varepsilon \in H^1(\vrho)\cap L^1(\vrho)\cap L^\infty$ instead of $u$, multiplying with $u_\varepsilon$, integrating with respect to $\vrho dx$, using Lemma \ref{lem-all-symmetry-eq-L} (i), the monotonicity of $\tilde{\beta}_\varepsilon$, Young's inequality, and $b_\varepsilon^*(u_\varepsilon) \in H^1(\vrho)$ gives
\begin{equation}\label{a-eq}
	|u_\varepsilon|^2_{L^2(\vrho)} +2\lambda \int_{\R^d} (\beta_{\varepsilon})'(u_\varepsilon)|\nabla u_\varepsilon|^2\,\vrho dx    \leq |f|_{L^2(\vrho)}^2 + 2\lambda\int_{\R^d}D_\varepsilon b_\varepsilon^*(u_\varepsilon)\cdot \nabla u_\varepsilon\,\vrho dx.
\end{equation}
Defining the non-negative function $\psi$,
\begin{align*}
	\psi (r):= \int_0^r b^*_\varepsilon(s)\,ds, \quad r\in \R,
\end{align*}
we rewrite and estimate the final integral on the right-hand side of the previous inequality as
\begin{align}\label{eq:estimate-indep-epsilon}
-2\lambda \int_{\R^d}(\divvrho D_\varepsilon)\psi(u_\varepsilon)\,\vrho dx \leq \lambda |b|_\infty |u_\varepsilon|_\infty |u_\varepsilon|_{1,\vrho}|(\divvrho D)^- + |D||_\infty.
\end{align}
Note that the right-hand side of this estimate is bounded by a finite constant $C$, which is independent of $\varepsilon >0$.
Setting $g_\varepsilon(r):= (I+\varepsilon \beta)^{-1}(r), r \in \R$, one has
\begin{equation*}
	\beta_\varepsilon'(r) \geq \frac{\beta'}{1+\beta'}\big(g_\varepsilon(r)\big)g_\varepsilon'(r)^2,
\end{equation*}
and introducing $a(r):= \int_0^r\frac{\beta'}{1+\beta'}(s)ds, r \in \R$, one has
\begin{equation*}
|\nabla a(g_\varepsilon(u_\varepsilon))|^2 \leq |\nabla u_\varepsilon|^2 \frac{\beta'}{1+\beta'}\big(g_\varepsilon(u_\varepsilon)\big)(g_\varepsilon)'(u_\varepsilon)^2\leq  |\nabla u_\varepsilon|^2\beta_\varepsilon'(u_\varepsilon),
\end{equation*}
and thus we obtain from \eqref{a-eq},\eqref{eq:estimate-indep-epsilon}:
\begin{equation*}
	|u_\varepsilon|^2_{L^2(\vrho)}   + 2\lambda\int |\nabla a(g_\varepsilon(u_\varepsilon))|^2\,\vrho dx 
	 \leq |f|_{L^2(\vrho)}^2 + C.
\end{equation*}
Since also
\begin{equation*}
	|a(g_\varepsilon(r))| \leq |g_\varepsilon(r)| = |(I+\varepsilon\beta)^{-1}(r)| \leq |r|,
\end{equation*}
$\{a(g_\varepsilon(u_\varepsilon))\}_{\varepsilon>0}$ is uniformly bounded in $H^1(\vrho)$, thus compact in $L^2_{\loc}$, so there is $\Lambda \in L^2_{\loc}$ such that for a suitable subsequence, the $L^2_{\loc}$-convergence
\begin{equation*}
	a(g_\varepsilon(u_\varepsilon)) \xrightarrow{\varepsilon \to 0}\Lambda
\end{equation*}
holds, and on a further subsequence this convergence also holds almost surely (we restrict our attention to this subsequence without changing notation). Using the \textit{strict} monotonicity of $r\mapsto \beta(r)$, it follows that $r\mapsto a(r)$ is strictly increasing, hence invertible, so that
$$g_\varepsilon(u_\varepsilon) \xrightarrow{\varepsilon\to 0}a^{-1}(\Lambda)$$
$\vrho dx$-a.s. This yields $\varepsilon\beta(g_\varepsilon(u_\varepsilon)) \xrightarrow{\varepsilon\to 0}0$ $\vrho dx$-a.s., and hence
\begin{equation*}
u_\varepsilon = g_\varepsilon(u_\varepsilon)+ \varepsilon\beta(g_\varepsilon(u_\varepsilon)) \xrightarrow{\varepsilon\to 0}a^{-1}(\Lambda)\quad \vrho dx-\text{a.s.}.
\end{equation*}
Therefore, for $u$ from \eqref{weak-conv-to-u} we have $u = a^{-1}(\Lambda)$ a.s. and, by \eqref{Linfty-bound-u_eps}, we conclude $u \in L^\infty$ with 
\begin{align*}
|u|_\infty \leq  |f|_\infty (1  +|(\divvrho D)^- + |D| |^{\frac 1 2}_\infty),
\end{align*}
and $u_\varepsilon \xrightarrow{\varepsilon\to 0}u$ in $L^p_{\loc}$ for each $p \in [1,\infty)$. Moreover, by Fatou's lemma and the boundedness of $\{u_\varepsilon\}_{\varepsilon>0}$ in each $L^p(\vrho)$, we infer $u \in L^p(\vrho)$ for each $p \in [1,\infty]$. Furthermore, it is easily seen that $\{\tilde{\beta}_{\varepsilon}\}_{\varepsilon \in (0,1)}$ is locally equicontinuous and hence, since $\beta$ is locally Lipschitz, the convergence
\begin{equation}\label{conv-beta}
	\tilde{\beta}_{\varepsilon}(u_\varepsilon) \xrightarrow{\varepsilon\to 0}\beta(u)
\end{equation}
holds $\vrho dx$-a.s. and in $L^p_{\loc}$, $p\geq 1$. Hence $\eta = \beta(u)$.


Letting $\varepsilon \to 0$ in \eqref{approx-eq-with-A_epsilon}, it follows that $A_\varepsilon u_\varepsilon$ converges in $L^p_{\text{loc}} $, $p \in [1,\infty)$, to some limit $\Psi \in  L^1(\vrho)\cap L^\infty$. Moreover, by \eqref{conv-beta}, the following convergences hold in $\mathcal{D}'$:
\begin{equation*}
	\Delta\tilde{\beta}_{\varepsilon}(u_\varepsilon) \xrightarrow{\varepsilon\to 0}\Delta \beta(u),\quad \nabla \Phi \cdot \nabla \tilde{\beta}_{\varepsilon}(u_\varepsilon) \xrightarrow{\varepsilon\to 0 }\nabla \Phi \cdot \nabla\beta(u), \quad\varepsilon \tilde{\beta}_\varepsilon(u_\varepsilon) \xrightarrow{\varepsilon \to 0}0.
\end{equation*}
Moreover, since $(b^*_\varepsilon)_{\varepsilon >0}$ is locally equicontinuous and $D_\varepsilon \longrightarrow D$ $\vrho dx$-a.s. with $|D_\varepsilon|\leq |D|$, also 
$$\divvrho (D_\varepsilon b^*_\varepsilon(u_\varepsilon))  \xrightarrow{\varepsilon\to 0} \divvrho(Db^*(u))\text{ in }\mathcal{D'}.$$
Therefore, the identity
\begin{equation*}
\Psi=	L^1_{\textup{loc}}-\lim_{\varepsilon \to 0}A_\varepsilon u_\varepsilon = -\Delta \beta(u) + \nabla \Phi \cdot \nabla \beta(u) + \divvrho(D b^*(u))=A_0(u)
\end{equation*}
holds in $L^1(\vrho)\cap L^\infty$. In particular, $u \in D(A_0)$ and 
\begin{equation*}
	u + \lambda A_0 u = f
\end{equation*}
holds in $L^1(\vrho)$. Set
$$J_\lambda: L^1(\vrho)\cap L^\infty \to D(A_0), \quad f \mapsto J_\lambda f := u=u(\lambda,f).$$ By the $L_{\loc}^1$-convergence $u_\varepsilon \xrightarrow{\varepsilon\to 0}u$, one obtains \eqref{resolvent-eq-statement},\eqref{infty-bound-statement} and, by also using \eqref{eq:L1-contract-all-f},
\begin{equation*}\label{Jlambda-eq}
	|J_\lambda f- J_\lambda g |_{1,\vrho} \leq |f-g|_{1,\vrho}, \quad \forall f,g \in L^1(\vrho)\cap L^\infty.
\end{equation*}
Concerning \eqref{prop-measure-lemma}, we have for $f \in L^1(\vrho)\cap L^\infty$ and any pair $(\varepsilon,\lambda) \in (0,\infty)\times (0,\lambda_0)$
\begin{equation*}\label{eq:pos-preserv}
	f \geq 0 \text{ a.s.}\implies u_\varepsilon(f)  \geq 0 \text{ }\vrho dx-\text{a.s.}
\end{equation*}
Indeed, for $f \geq 0$, multiplying \eqref{approx-eq-with-A_epsilon} with $u_\varepsilon$ in place of $u$ by $\chi_\delta(u_\varepsilon^-)$, integrating with respect to $\vrho dx$, and using the monotonicity of $\tilde{\beta}_\varepsilon$ and $\chi_\delta' \geq 0$ on $[0,\infty)$ gives
\begin{equation*}
	\int_{\R^d} u_\varepsilon \chi_\delta(u_\varepsilon^-)\,\vrho dx \geq  \frac{\lambda}{\delta} \int_{\{u_\varepsilon \in [-\delta,0]\}} D_\varepsilon b_\varepsilon^*(u_\varepsilon) \cdot \nabla u_\varepsilon^-\,\vrho dx \geq -\lambda |D_\varepsilon|_\infty |b|_\infty \int_{\{u_\varepsilon \in [-\delta,0]\}}|\nabla u_\varepsilon^-|\,\vrho dx \xrightarrow{\varepsilon\to 0} 0.
\end{equation*}
Consequently, we find
\begin{equation*}
	\int_{\{u_\varepsilon \leq 0\}} u_\varepsilon\,\vrho dx = \liminf_{\varepsilon \to 0}\int_{\R^d}u_\varepsilon \chi_\delta(u_\varepsilon^-)\,\vrho dx \geq 0,
\end{equation*}
and thus also $u = \lim_{\varepsilon\to 0}u_\varepsilon \geq 0$ $\vrho dx$-a.s.
Furthermore, for non-negative $f \in L^1(\vrho)\cap L^\infty$ and $\varphi \in C^\infty_c$, Lemma \ref{lem-all-symmetry-eq-L} gives
\begin{align}\label{help-eq}
	\notag \int J_\lambda(f) \varphi \,\vrho dx &= \lim_{\varepsilon \to 0}\int_{\R^d} u_\varepsilon \varphi\,\vrho dx 
	\\&= \int f \varphi \,\vrho dx+\lambda\lim_{\varepsilon\to 0}\bigg( \int_{\R^d} \beta(u_\varepsilon)(\Delta \varphi -\ \nabla \Phi\cdot  \nabla \varphi )\,\vrho dx + \int_{\R^d}D_\varepsilon b^*_\varepsilon(u_\varepsilon)\cdot \nabla \varphi \,\vrho dx\bigg)
	\\& \notag = \int f \varphi \,\vrho dx + \lambda \int_{\R^d}\beta(J_\lambda f)\big(\Delta \varphi - \nabla \Phi\cdot \nabla \varphi\big)\,\vrho dx  + \int_{\R^d} Db^*(J_\lambda f)\cdot \nabla \varphi \,\vrho dx.
\end{align}
Since $J_\lambda(f),\beta(J_\lambda f), Db^*(J_\lambda f) \in L^\infty$, choosing $\varphi = \varphi_l$ such that $0\leq \varphi_l\leq 1$, $\varphi_l (r)= 1$ for $|r| \leq l$ and  $|\nabla \varphi_l|+|\Delta \varphi_l| \longrightarrow 0$ as $l \to \infty$ such that $|\nabla \varphi_l|+|\Delta \varphi_l|$ are bounded in $L^\infty$ uniformly in $l$ and considering \eqref{help-eq} for $l\to \infty$ gives
$$\int J_\lambda(f) \vrho dx = \int f \vrho dx,$$
where we also used $-\vrho\nabla \Phi  = \nabla \vrho \in L^1(\R^d,\R^d)$. Hence, \eqref{prop-measure-lemma} holds.
Finally, concerning \eqref{eq:density-of-L1-Linfty-in-DA}, let $g \in C^\infty_c$. For all pairs $(\varepsilon, \lambda) \in (0,\infty)\times (0,\lambda_0)$, we have $g \in D(A_\varepsilon)$ and
\begin{equation*}
	g- \lambda\Delta \tilde{\beta}_{\varepsilon}(g) +\lambda\nabla \Phi \cdot \nabla \tilde{\beta}_{\varepsilon}(g) + \lambda \varepsilon\tilde{\beta}_{\varepsilon}(g)  + \divvrho(D_\varepsilon b^*_\varepsilon(g))= g+\lambda A_\varepsilon g.
\end{equation*}
Since the right-hand side is in $L^1(\vrho)\cap L^2(\vrho)$ and $g \in L^2(\vrho)$ with $\tilde{\beta}_{\varepsilon}(g) \in D(L)$, we have
$g = u_\varepsilon(\lambda,g+\lambda A_\varepsilon g)$, which implies
\begin{equation*}
	|g-u_\varepsilon(\lambda,g)|_{L^1(\vrho)} \leq \lambda|A_\varepsilon g|_{1,\vrho}.
\end{equation*}
Since $\beta\in C^2$, it follows that $\tilde{\beta}_{\varepsilon}$, $\tilde{\beta}_{\varepsilon}'$ and $\tilde{\beta}_{\varepsilon}''$ are locally bounded uniformly in $\varepsilon \in (0,1)$, and hence
\begin{align*}
	|A_\varepsilon g|_{1,\vrho} \leq |\Delta \tilde{\beta}_{\varepsilon}(g)|_{1,\vrho}+ |\nabla \Phi \cdot \nabla \tilde{\beta}_{\varepsilon}(g)|_{1,\vrho}+\varepsilon|\tilde{\beta}_{\varepsilon}(g)|_{1,\vrho} +|\divvrho(D_\varepsilon b^*_\varepsilon(g)|_{1,\vrho}
	\leq C (|g|_{W^2,1(\vrho)}+|g|_{H^2(\vrho)}),
\end{align*}
where $C>0$ depends on $g, \beta, \Phi, b$ and $D$, but not on $\varepsilon \in (0,1)$ or $\lambda>0$. Hence by the $L^1_{\text{loc}}$-convergence of $u_\varepsilon(\lambda,g)$ to $J_\lambda g$, we conclude \eqref{eq:density-of-L1-Linfty-in-DA}, which concludes the proof of Lemma \ref{main-lemma}.
\qed
\\

Now we complete the proof of Theorem \ref{main-thm} by showing that the generalized solutions $u$ are also solutions to \eqref{equation-main} in distributional sense.
\begin{proof}[Proof of Theorem \ref{main-thm} continued]
Denote by $u=u(u_0)$ the generalized solution to \eqref{equation-main} given by Theorem \ref{main-thm}, i.e. $u(t)$ is the (locally uniform in $t$) $L^1(\vrho)$-limit as $h \to 0$ of $u_h(t)$, where $u_h$ is the step function from \eqref{eq:fin-diff-scheme} (with $A$ as in\eqref{def:op-A} instead of $\tilde{A}$), with $u_h^i = J_h^{-i}u_0$.
We have (setting $u_h(s) := u_0$ for $s \in (-\infty,0)$)
\begin{equation*}
	u_h(t) +hA(u_h(t)) = u_h(t-h),\quad t >0,
\end{equation*}
in $L^1(\vrho)$, which after multiplying with $\varphi \in C_c^\infty([0,\infty)\times \mathbb{R}^d)$ and  integrating with respect to $dt\otimes \vrho dx$ gives
\begin{align}\label{eq:approx-eq-distr-sol}
	\int_0^\infty& \int_{\mathbb{R}^d} h^{-1}(u_h(t,x)-u_h(t-h,x))\varphi(t,x)-\beta(u_h(t,x))L\varphi(t,x)\,\vrho dx dt 
	\\&\notag - \int_{\R^d} Db^*(u_h(t,x))\cdot \nabla \varphi(t,x)\,\vrho dx= 0.
\end{align}
Because the local uniform $L^1(\vrho)$-convergence in $t$ of $u_h(t)$ to $u(t)$ as $h \to 0$ implies, using \eqref{eq:exp-bound-infty-norm}, $\sup_{0<h<h_1}|u_h(t)|_\infty \leq (1+C)|u_0|_\infty$ for sufficiently small $h_1 >0$ and $C>0$ (see also Remark \ref{rem:rem} (i)), we obtain
$\beta(u_h(t)) \to \beta(u(t))$ in $L^1(\vrho)$ locally uniformly in $t$. Moreover, 
\begin{align*}
		\int_0^\infty \int_{\mathbb{R}^d} &\frac{(u_h(t,x)-u_h(t-h,x)}{h}\varphi(t,x) \,\vrho dx dt \\&= 	-\int_0^\infty \int_{\mathbb{R}^d} \frac{\varphi(t+h,x)-\varphi(t,x)}{h}u_h(t,x)\,\vrho dx dt-\int_{-h}^0\int_{\mathbb{R}^d}\frac{\varphi(t+h,x)}{h}u_0(x)\,\vrho dx dt,
\end{align*}
which, as $h\to 0$, by the locally uniform in $t$ convergence of $u_h$ to $u$ in $L^1(\vrho)$, converges to 
\begin{equation*}
	-\int_0^\infty \int_{\R^d}\partial_t\varphi(t,x)u(t)\,\vrho dx dt - \int_{\R^d} u_0\varphi(0,x)\,\vrho dx .
\end{equation*}
Consequently, letting $h\to0$ in \eqref{eq:approx-eq-distr-sol} yields
\begin{equation*}
	\int_0^\infty \int_{\R^d} \bigg(\partial_t\varphi(t,x)+\frac{\beta(u(t,x)}{u(t,x)}L \varphi(t,x) +D(x)b(u(t,x))\cdot \nabla \varphi(t,x)\,\bigg)u(t,x)\vrho dx dt +\int_{\R^d}u_0 \varphi(0,x)\,\vrho dx = 0.
\end{equation*}
Equivalently (see \cite[Thm.6.1.2]{FPKE-book15}), $v(t,x):= u(t,x)\vrho(x)$ satisfies for all $t \in (0,\infty)$
\begin{align*}
	\int_{\R^d} \varphi(t)\,v(t)dx = \int_{\R^d} &\varphi(0)\,v(0)dx 
	\\&+ \int_0^t \int_{\R^d} \frac{\beta(v(s)\vrho^{-1})}{v(s)\vrho^{-1}}\Delta \varphi + \bigg(Db(v(s)\vrho^{-1})-\frac{\beta(v(s)\vrho^{-1})}{v(s)\vrho^{-1}}\nabla \Phi \bigg)\cdot \nabla \varphi \,v(s)dx ds,
\end{align*}
i.e. $t\mapsto v(t,x)dx$ is a weakly continuous distributional solution to \eqref{equation-main} with initial datum $u_0 \vrho dx$. This completes the proof of Theorem \ref{main-thm}.
\end{proof}
Finally, we give the proof of Proposition \ref{prop:prop1}.
\\
\\
\textit{Proof of Proposition \ref{prop:prop1}.} It suffices to extend Lemma \ref{main-lemma} from $L^1(\vrho)\cap L^\infty$ to $L^1(\vrho)$. More precisely, we slightly change the definition of the domain of $A_0$ to
$$\tilde{D}(A_0) := \big\{f \in L^1(\vrho)\,|\, -L_0\beta(f)+\divvrho(Db(f)f) \in L^1(\vrho)\big\},$$
let $0< \lambda <\lambda_0$ with $\lambda_0$ as in Lemma \ref{main-lemma},
and we show: $R(I+\lambda A_0) = L^1(\vrho)$, $J_\lambda$ extends to $L^1(\vrho)$ such that
\begin{equation}\label{auxaux}
	|J_\lambda f - J_\lambda g|_{1,\vrho} \leq |f-g|_{1,\vrho}, \quad \forall f,g \in L^1(\vrho)
\end{equation}
and \eqref{resolvent-eq-statement} extends to $f \in L^1(\vrho)$. Indeed, once this is proven, all assertions of the proposition but the final one follows directly from the Crandall--Liggett nonlinear semigroup theory, this time applied on the full Banach space $L^1(\vrho)$ (compare the first part of the proof of Theorem \ref{main-thm}) in Section \ref{sect2}). That the mild solutions are also distributional solutions follows as in the final part of the proof of Theorem \ref{main-thm} by noting that here the convergence of $\beta(u_h)$ to $\beta(u)$ locally uniformly in $t$ as $h\to0$ follows from (H2').
For $f \in L^1(\vrho)$, let $(f_n)_{n \geq 1} \subseteq L^1(\vrho)\cap L^\infty$ such that $f_n \longrightarrow f$ in $L^1(\vrho)$ as $n \to \infty$. Then the contraction property of $J_\lambda$ implies the existence of an $L^1(\vrho)$-limit poin $u=u(\lambda,f)$ of $(J_\lambda f_n)_{n \geq 1}$. Since $u_n +\lambda A_0 u_n = f_n$, it follows that $(A_0 u_n)_{n \geq 1}$ has an $L^1(\vrho)$-limit point $\Psi$. By (H2'), $\beta(u_n) \longrightarrow \beta(u)$ in $L^1(\vrho)$ as $n \to \infty$, hence $L_0 \beta(u_n) \longrightarrow L_0 \beta(u)$ in $\mathcal{D}'$ as $n \to \infty$. Since, due to the boundedness of $b$ and $D$, also $Db^*(u_n) \longrightarrow Db^*(u)$ in $L^1(\vrho)$ as $n \to \infty$, we obtain $A_0u_n \longrightarrow -L_0 \beta(u) + \divvrho(Db^*(u))$ in $\mathcal{D}'$. Hence, $u \in \tilde{D}(A_0)$, $\Psi = A_0 u$, and thus $u=u(\lambda,f) \in (I+\lambda A_0)^{-1}f$. Extending $J_\lambda$ to $L^1(\vrho)$ via $J_\lambda f := u(\lambda,f)$, it is easily seen that \eqref{auxaux} holds and \eqref{resolvent-eq-statement} extends to all $f\in L^1(\vrho)$. 

Concerning the final assertion, note that $S(t)u_0 = \lim_{n \to \infty} (J_{\frac t n})^nu_0$, let $f \geq 0$, $|f|_{1,\vrho} = 1$, and set $f_n := \max(f,n) \in L^1(\vrho)\cap L^\infty$, $n\in \N_0$. We know from the proof of Theorem \ref{main-thm} that $f_n \geq 0 \implies J_\lambda f_n \geq 0$ and, for such $f_n$, $|J_\lambda f_n|_{1,\vrho} = |f_n|_{1,\vrho}$. Hence, since $J_\lambda f =L^1(\vrho)- \lim_{n \to \infty}J_\lambda f_n$, we obtain $J_\lambda f \geq 0$ and $|J_\lambda f|_{1,\vrho} = |f|_{1,\vrho}$, which implies $S(t)u_0\,\vrho dx \in \Pscr$, if $u_0 \vrho dx\in \Pscr$. \qed 

\section{Nonlinear perturbed Ornstein--Uhlenbeck processes}\label{sect:stoch-section}

Let Hypothesis 1 be satisfied.
\subsection{Existence}
In this section, we solve the McKean--Vlasov SDE \eqref{eq:SDE} with $1D$-time marginals given by the mild solutions to \eqref{equation-main}.
As said in the introduction, equation \eqref{eq:SDE} can be regarded a model for generalized nonlinear perturbed Ornstein--Uhlenbeck processes, since for $D = 0$, $\Phi = -\frac{|x|^2}{2}$ and $\beta(r)=\frac{\sigma^2}{2}r$, $\sigma >0$, one recovers the classical Ornstein--Uhlenbeck SDE
\begin{equation*}
	dX_t = -X_t dt + \sigma dB_t.
\end{equation*}
\begin{dfn}
	A \textit{(probabilistically weak) solution} to \eqref{eq:SDE} is a triple consisting of a filtered probability space $(\Omega, \Fscr, (\Fscr_t)_{t \geq 0}, \mathbb{P})$, an $\R^d$-valued $(\Fscr_t)$-Brownian motion $B$ and an $(\Fscr_t)$-adapted stochastic process $X = (X_t)_{t \geq 0}$ on $\Omega$ such that $\mathbb{P}\circ X_t^{-1} = v(t,x)dx$,
\begin{equation*}
\int_0^T D(X_t)b\big(v(t,X_t)\vrho^{-1}(X_t)\big) - \frac{\beta(v(t,X_t)\vrho^{-1}(X_t))}{v(t,X_t)\vrho^{-1}(X_t)}\nabla \Phi(X_t)dt \text{  and  } \int_0^T\frac{\beta(v(t,X_t)\vrho^{-1}(X_t))}{v(t,X_t)\vrho^{-1}(X_t)}dt
\end{equation*}
belong to $L^1(\Omega;\mathbb{P})$ for all $T>0$,
and $\mathbb{P}$-a.s.
\begin{align*}
	X_t = X_0 + \int_0^t D(X_t)&b\big(v(t,X_t)\vrho^{-1}(X_t)\big) - \frac{\beta(v(t,X_t)\vrho^{-1}(X_t))}{v(t,X_t)\vrho^{-1}(X_t)}\nabla \Phi(X_t)dt
	 \\&+ \int_0^t \sqrt{2\frac{\beta(v(t,X_t)\vrho^{-1}(X_t))}{v(t,X_t)\vrho^{-1}(X_t)}}dB_t,\quad \forall t \geq 0.
\end{align*}
\end{dfn}
Let $u_0 \in \Pscr_\infty(\vrho)$, $u=u(u_0)$ be the generalized solution to \eqref{equation-main} from Theorem \ref{main-thm} with $u(0) = u_0$ and consider the curve 
\begin{equation}\label{eqeq}
t\mapsto \nu(t,u_0):= u(t)\vrho dx,
\end{equation}
i.e. $\nu$ is a weakly continuous distributional probability solution to \eqref{equation-main} in the sense of Definition \ref{def:distr-sol}. We have
\begin{align*}
	\int_0^t \int_{\R^d} &\bigg|\frac{\beta(v(s)\vrho^{-1})}{v(s)\vrho^{-1}}\bigg| + \bigg|b(v(s)\vrho^{-1})D -\frac{\beta(v(s)\vrho^{-1})}{v(s)\vrho^{-1}} \nabla \Phi\bigg |\,v(s)dxds 
	\\& \leq \int_0^t \int_{\R^d} \big|\beta(u(s))\big|  + \big|b(u)uD\big| +\big|\beta(u(s))\nabla \Phi \big|\,\vrho dx ds<\infty,
\end{align*}
where the second inequality holds due to $u \in L_{\text{loc}}^\infty([0,\infty),L^\infty)$ (which follows from \eqref{eq:exp-bound-infty-norm}), the boundedness of $D$ and $b$, the local Lipschitz-continuity of $\beta$, and $\nabla \Phi \in L^1(\R^d,\R^d;\vrho)$. Hence the \textit{superposition principle} (see \cite[Sect.2]{BR18_2} and \cite{Trevisan16}) yields the existence of a probabilistically weak solution $(X_t)_{t \geq 0}$ to \eqref{eq:SDE} such that $\mathcal{L}(X_t) = \nu(t,u_0)$, $t\geq 0$.

If also (H2') holds, this existence result extends as follows. By Proposition \ref{prop:prop1}, there is a distributional probability solution $\nu(t,u_0):t \mapsto S(t)u_0 \,\vrho dx$ for all $u_0$ such that $u_0 \,\vrho dx \in \Pscr$. Hence, in this case, for any such $u_0$ there is a weak solution $X=X(u_0)$ to \eqref{eq:SDE} with $\mathcal{L}(X_t) = \nu(t,u_0)$, $t \geq 0$.

\subsection{Solutions to nonlinear perturbed Ornstein--Uhlenbeck equations as nonlinear Markov processes}In addition to Hypothesis 1, now we also assume
	\begin{enumerate}
		\item [(H5)] $b \in C^1(\R)$, and for each compact $K \subseteq \R$ there is $\alpha_K \in (0,\infty)$ such that
		\begin{equation*}
			|b'(r)r  +b(r)| \leq \alpha_K |\beta'(r)|,\quad \forall r \in K.
		\end{equation*}
	\end{enumerate}
	If $b \in C^1(\R)$ and $\beta'(r)>0$ for all $r \in \R$, the second part of (H5) is automatically satisfied.
	\\
	\noindent
In the previous subsection, we showed the following. For any initial datum $\nu_0 = u_0 \vrho(x) dx \in \Pscr_0$, where
\begin{equation*}
	\Pscr_0 := \{\nu \in \Pscr\,|\,\nu = u_0\vrho (x)dx, u_0 \in \Pscr_\infty(\vrho) \},
\end{equation*}
there is a weak solution $X = X(u_0)$ to \eqref{eq:SDE}  such that its curve of $1D$-time marginals is given by \eqref{eqeq}. By \eqref{eq:exp-bound-infty-norm},\eqref{sg-prop} it follows that $\nu(t,u_0) \in \Pscr_0$, $u \in L^\infty_{\text{loc}}((0,\infty),L^\infty)$, and 
\begin{equation}\label{flow-prop}
	\nu(t+s,u_0) = \nu(t,u(s)), \quad \forall t,s\geq 0, u_0 \in \Pscr_\infty(\vrho).
\end{equation}
The aim of this subsection is to prove that the path law family $\{\mathbb{P}_{u_0}\}_{u_0\in \Pscr_\infty(\vrho)}$, $\mathbb{P}_{u_0}:= \mathcal{L}(X(u_0))$ constitutes a \textit{nonlinear Markov process} in the sense of \cite{McKean1-classical,R./Rckner_NL-Markov22}. More precisely, we want to show that the following \textit{nonlinear Markov property} holds, compare \cite[Def.2.1]{R./Rckner_NL-Markov22}. Denote by $\pi_t: C([0,\infty),\R^d) \to \R^d$, $\pi_t(w) = w(t)$, the canonical projections and set $\Fscr_t := \sigma(\pi_s, 0\leq s \leq t)$, then the nonlinear Markov property is
\begin{equation*}
\mathbb{P}_{u_0}(\pi_t \in A | \Fscr_r) = p_{u_0,r,\pi_{r}}(\pi_{t-r} \in A), \quad \forall 0\leq r \leq t, u_0 \in \Pscr_\infty(\vrho), A \in \Bscr(\R^d).
\end{equation*}
Here $(p_{u_0,r,y}(dw))_{y \in \R^d}$ is a regular conditional probability kernel from $\R^d$ to $\Bscr(C([0,\infty),\R^d))$ of $\mathbb{P}_{u(r)}[\,\,\cdot \,\,| \pi_0 = y]$, $y \in \R^d$, where $u(r) = u(u_0)(r)$.
First, we observe
$$\nu_0 \in \Pscr_0, \tilde{\nu}_0 \in \Pscr, \tilde{\nu}\leq C \nu_0 \text{ for some }C\geq 1 \implies \tilde{\nu}_0 \in \Pscr_0.$$
Therefore, by \eqref{flow-prop} and \cite[Thm.3.7, Cor.3.8]{R./Rckner_NL-Markov22}, it suffices to prove that the \textit{linearized Fokker--Planck equations}, obtained by fixing a priori the densities $t\mapsto u(t)\vrho$ in the measure component of the coefficients in the distributional formulation of \eqref{equation-main}, are well-posed in a suitable subclass, see Lemma \eqref{lem:lin-u} after the following definition.

\begin{dfn}
	Let $s \geq 0$ and $y \in L^\infty_{\text{loc}}((s,\infty),L^\infty)$. A weakly continuous curve $\zeta: [s,\infty)\to \Pscr$ is a \textit{(distributional) probability solution to the $y\vrho$-linearized version of \eqref{equation-main}} with initial datum $\zeta_s$, if $\int_s^T \int_{\R^d} |\nabla \Phi| \,d\zeta_t dt <\infty$ for all $T\geq s$, and
	\begin{equation}\label{eqaux1}
		\int_{\R^d} \varphi \,d\zeta_t = \int_{\R^d} \varphi \,d\zeta_s + \int_s^t \int_{\R^d} \frac{\beta(y)}{y}L\varphi +b(y)D \cdot \nabla \varphi\,d\zeta_s ds,\quad \forall t \geq s, \varphi \in C^\infty_c.
	\end{equation}
\end{dfn}
Equation \eqref{eqaux1} is obtained by a priori replacing $v(s)$ in \eqref{eqaux0} by the fixed curve $s\mapsto y(s)\vrho$.
Note that since $b$ and $D$ are bounded, and since $r \mapsto \frac{\beta(r)}{r}$ is locally bounded and $y \in L^\infty_{\text{loc}}((s,\infty),L^\infty)$, the local-global integrability condition for all coefficients except for $\nabla \Phi$ is automatically satisfied, i.e.
\begin{equation*}
	\int_s^T \int_{\R^d} \big|\frac{\beta(y)}{y}\big| + \big|b(y)D\big| \,d\zeta_t dt < \infty, \quad \forall T\geq s,
\end{equation*}
for any measurable curve $t \mapsto \zeta_t \in \Pscr$.

\begin{lem}\label{lem:lin-u}
	Let $\nu_0 \in \Pscr_0$, $\nu_0 = u_0\vrho dx$, and $\nu(t,u_0) = u(t)\vrho dx$ be as in \eqref{eqeq}. For each $s \geq 0$, $t\mapsto \nu(t,u_0)$, $t \geq s$, is the unique probability solution in $L^\infty_{\textup{loc}}((s,\infty),L^\infty)$ to the $u\vrho$-linearized version of \eqref{equation-main} in the sense of the previous definition with initial datum $\nu(s,u_0)$.
\end{lem}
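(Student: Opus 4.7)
The plan is to exploit the linearity of the $u\vrho$-linearized equation. If $\zeta^1, \zeta^2$ are two probability solutions in $L^\infty_{\loc}((s,\infty),L^\infty)$ with $\zeta^1_s = \zeta^2_s = \nu(s,u_0)$, I write $\zeta^i_t = z_i(t)\vrho\,dx$ with $z_i \in L^\infty_{\loc}$, set $w := z_1 - z_2$, and let $\mu_t := w(t)\vrho\,dx$. Subtraction yields the homogeneous weak equation
\begin{equation*}
	\int_{\R^d} \varphi\, d\mu_t = \int_s^t \int_{\R^d} \tfrac{\beta(u)}{u} L\varphi + b(u) D \cdot \nabla\varphi\, d\mu_r\, dr, \quad \forall t \ge s,\, \varphi \in C^\infty_c,
\end{equation*}
with $\mu_s = 0$ and with density $w\vrho$ whose $L^1(dx)$-norm is locally bounded in $t$ (since $z_i \in L^\infty$ and $\vrho\,dx$ is finite). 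First I would observe that $t\mapsto \nu(t,u_0)$ itself lies in the required class, by the weakly continuous distributional solution property established at the end of the proof of Theorem \ref{main-thm} together with the $L^\infty$-bound \eqref{eq:exp-bound-infty-norm}; thus existence is automatic and only uniqueness requires work, i.e., $\mu \equiv 0$.

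The main step is a duality argument. Fix $T > s$ and $\varphi \in C^\infty_c(\R^d)$. I would construct a function $\psi \in C^{1,2}([s,T]\times\R^d)$ with sufficient decay at infinity solving the backward Kolmogorov/adjoint equation
\begin{equation*}
	\partial_t \psi + \tfrac{\beta(u)}{u} L\psi + b(u) D \cdot \nabla\psi = 0 \text{ on } (s,T)\times\R^d, \quad \psi(T,\cdot) = \varphi.
\end{equation*}
Inserting such $\psi$ into the weak equation for $\mu$ and using $\mu_s = 0$ gives $\int_{\R^d} \varphi\, d\mu_T = 0$, whence $\mu_T = 0$, and varying $T$ concludes $\zeta^1 = \zeta^2$. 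To build $\psi$ I would mollify $u$ jointly in $(t,x)$, truncate $\nabla\Phi$ outside balls, and add an $\epsilon\Delta$ regularization, obtaining smooth classical solutions $\psi_{\epsilon,n}$ by standard backward parabolic theory; using the convexity of $\Phi$ and the coercivity $\Phi(x) \ge a + b|x|$ from Remark 2.2(i), $\Phi$ plays the role of a Lyapunov function controlling the tails of $\psi_{\epsilon,n}$ and its gradient, which should allow extraction of a limit $\psi$ with the integrability needed to serve as a test function against $\mu$.

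The hard part will be making the limit procedure rigorous and justifying the test-function step. Two obstacles arise: (i) $\beta(u)/u$ may vanish on $\{u=0\}$, so the backward problem is only degenerate parabolic; and (ii) the effective drift $b(u)D - \tfrac{\beta(u)}{u}\nabla\Phi$ is unbounded, which is sensitive to the tail behavior of $\psi$ tested against the finite signed measure $\mu_t$. Both should be tractable because $\beta \in C^2$ with $\beta'(r)>0$ for $r\ne 0$ together with the uniform $L^\infty$-bound on $u$ on $[s,T]$ yields smooth coefficients after mollification and uniform local parabolicity after the $\epsilon\Delta$-regularization, while the Lyapunov structure of $\Phi$ (with $\nabla\Phi \in L^1(\vrho)$) provides enough tail control to absorb the unbounded-drift term. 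Should the direct duality construction prove too delicate, an alternative is to invoke a uniqueness theorem for linear Fokker--Planck equations with bounded diffusion and Lyapunov drift of Bogachev--Da Prato--R\"ockner type, applied directly to the signed measure $\mu_t$ on $\R^d$.
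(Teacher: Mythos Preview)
The paper does not carry out a self-contained argument here; it simply asserts that the proof of \cite[Thm.~4.1]{BR22} (and the underlying nonlinear Thm.~3.2 there) transfers verbatim once one replaces $\Delta$ and $H^k$ by $L$ and $H^k(\vrho)$. That argument is an $H^{-1}(\vrho)$-energy estimate: one tests the difference of two solutions against $(\varepsilon I - L)^{-1}$ applied to that difference, and the monotonicity of $\beta$ together with (H5) makes the resulting quadratic form coercive. Your Holmgren-type duality via a backward Kolmogorov solution is a genuinely different route; both are classical strategies for linear FPE uniqueness, but the paper's choice is dictated by the weighted $L^2$-symmetry of $L$ already developed in Lemma~\ref{lem-all-symmetry-eq-L}.

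Your outline has a real gap: you never invoke hypothesis (H5), which is introduced precisely for this lemma. The condition $|b'(r)r+b(r)|\le \alpha_K|\beta'(r)|$ is what allows the first-order term $b(u)D\cdot\nabla$ to be absorbed by the possibly degenerate second-order term $\tfrac{\beta(u)}{u}L$. In your scheme, after adding $\varepsilon\Delta$ you obtain smooth $\psi_{\varepsilon,n}$, but to pass to the limit you need $\varepsilon$-uniform gradient bounds on $\psi_{\varepsilon,n}$ where the diffusion degenerates (i.e.\ where $u=0$ and $\beta'(0)=0$, which (H2) allows). Without (H5) there is no mechanism to control $b(u)D\cdot\nabla\psi_{\varepsilon,n}$ there, and the Lyapunov structure of $\Phi$ is irrelevant to this purely local degeneracy. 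The fallback you mention (Bogachev--Da~Prato--R\"ockner type results) also typically requires either nondegenerate diffusion or a structural compatibility between drift and diffusion of exactly the (H5) flavour.

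A secondary point: writing $\zeta^i_t=z_i(t)\vrho\,dx$ with $z_i\in L^\infty$ is stronger than ``density in $L^\infty$'', since $\vrho$ is not bounded below; you should work with the Lebesgue density directly, as the $H^{-1}(\vrho)$ method in \cite{BR22} does.
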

\begin{proof}
The assertion follows analogously to the proof of \cite[Thm.4.1]{BR22}. Indeed, under our assumptions (H1)-(H5), the proof of that result, as well as the proof of its nonlinear version, Theorem 3.2. of the same reference, can be repeated for the operator $L$ and the spaces $H^k(\vrho)$ instead of $\Delta$ and $H^k$, $k \in \N_0$, respectively. Indeed, since our assertion is restricted to probability solutions, we prove uniqueness of solutions in $L^\infty_{\text{loc}}((s,\infty),L^\infty\cap L^1)$, while in \cite{BR22} uniqueness is even proven in the larger class of solutions in $L^\infty_{\text{loc}}((0,\infty),H^{-1})$.
\end{proof}
Finally, we obtain the desired result.
\begin{prop}
	There is a nonlinear Markov process $\{\mathbb{P}_{u_0}\}_{u_0 \in \Pscr_\infty(\vrho)}$, which consists of path laws of solutions to \eqref{eq:SDE} and whose $1D$-time marginals are given by $u(t)\vrho dx$, where $u(t) = S(t)$, $t \geq 0$, is the flow of solutions to \eqref{equation-main} constructed in Theorem \ref{main-thm}. Moreover, for each $u_0\in \Pscr_\infty(\vrho)$, $\mathbb{P}_{u_0}$ is the only solution law to \eqref{eq:SDE} with initial datum $u_0 \vrho dx$ and $1D$-marginals $\nu(t,u_0)$, $t \geq 0$.
\end{prop}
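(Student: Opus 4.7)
The plan is to apply the abstract nonlinear Markov criterion of \cite[Thm.3.7, Cor.3.8]{R./Rckner_NL-Markov22}. First, I would fix the candidate family: for each $u_0 \in \Pscr_\infty(\vrho)$, the superposition principle recalled in Section 4.1 produces a probabilistically weak solution $X(u_0)$ to \eqref{eq:SDE} whose one-dimensional marginals are $\nu(t,u_0) = u(u_0)(t)\vrho\,dx$, with $u(u_0)$ the mild solution from Theorem \ref{main-thm}. Setting $\mathbb{P}_{u_0} := \mathcal{L}(X(u_0))$ on $C([0,\infty),\R^d)$ gives a candidate family with the prescribed marginals, and it remains to show that this family satisfies a nonlinear Markov property and is uniquely determined.

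Next, I would verify the hypotheses of the cited abstract theorem. The flow identity $\nu(t+s,u_0) = \nu(t,u(s))$ from \eqref{flow-prop} is in place; invariance of $\Pscr_0$ under the flow follows from \eqref{prop-measure-preserving}; and the closure property of $\Pscr_0$ under bounded Radon--Nikodym perturbations, noted just before Lemma \ref{lem:lin-u}, ensures that any conditional initial law obtained by disintegrating at time $r$ still lies in $\Pscr_0$, which is exactly what \cite[Cor.3.8]{R./Rckner_NL-Markov22} requires in order to identify the kernels $p_{u_0,r,y}$ with the restrictions of path laws of solutions started from the disintegrated measures. The essential analytic input is uniqueness of the $u\vrho$-linearized Fokker--Planck equation in the class $L^\infty_{\text{loc}}((s,\infty),L^\infty)$, which is precisely Lemma \ref{lem:lin-u}.

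With these hypotheses verified, \cite[Thm.3.7, Cor.3.8]{R./Rckner_NL-Markov22} directly produces regular conditional probability kernels $(p_{u_0,r,y})_{y \in \R^d}$ on $\Bscr(C([0,\infty),\R^d))$ and yields the nonlinear Markov identity $\mathbb{P}_{u_0}(\pi_t \in A \mid \Fscr_r) = p_{u_0,r,\pi_r}(\pi_{t-r} \in A)$ asserted in the proposition. The uniqueness statement is then a byproduct of the same criterion: any other path law of a weak solution to \eqref{eq:SDE} starting from $u_0\vrho\,dx$ with one-dimensional marginals $\nu(t,u_0)$ has drift and diffusion coefficients that, along its canonical realisation, coincide with those driving $X(u_0)$; the canonical coordinates under such a law therefore solve the same $u\vrho$-linearized martingale problem with the prescribed initial law, and Lemma \ref{lem:lin-u} together with the one-to-one correspondence between linear FPE solutions and path laws in \cite[Cor.3.8]{R./Rckner_NL-Markov22} forces agreement with $\mathbb{P}_{u_0}$. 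The main obstacle has thus already been concentrated in the linearized uniqueness result of Lemma \ref{lem:lin-u}; the remainder of the proof is a formal, and essentially bookkeeping, application of the abstract framework.
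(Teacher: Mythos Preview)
Your proposal is correct and follows exactly the paper's approach: the paper's own proof is a one-line reference to \eqref{flow-prop}, Lemma \ref{lem:lin-u}, and \cite[Cor.3.8]{R./Rckner_NL-Markov22}, and you have simply unpacked the verification of the hypotheses of that abstract criterion in more detail. There is no substantive difference.
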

\begin{proof}
	The result follows from \ref{flow-prop}, Lemma \ref{lem:lin-u} and \cite[Cor.3.8]{R./Rckner_NL-Markov22}.
\end{proof}

\paragraph{Acknowledgements.}Funded by the German Research Foundation (DFG) - Project number 517982119. The author would like to thank Michael Röckner for valuable discussions, as well as the research group of Franco Flandoli in Pisa for their hospitality.

\bibliographystyle{plain}
\bibliography{bib-collection}
\end{document}